\documentclass{amsart}
\usepackage{amsmath}
\usepackage{amsthm}
\usepackage{amssymb}
\usepackage{pstricks,pst-node}

\parindent = 0 pt
\parskip = 8 pt

\theoremstyle{plain}
  \newtheorem{theorem}{\bf Theorem}[section]
  \newtheorem{proposition}[theorem]{\bf Proposition}
  \newtheorem{lemma}[theorem]{\bf Lemma}

\theoremstyle{remark}
  \newtheorem{remark}[theorem]{\bf Remark}


{}

\date{February 2008}
\begin{document}
\title[Weak Dispersive estimates]{Weak Dispersive estimates for Schr\"odinger equations with
long range potentials}

\author{J. A. Barcel\'o, A. Ruiz, L. Vega and M. C. Vilela}
\thanks{
The first and second authors were supported by Spanish Grant
MTM2005-07652-C02-01, and the third and fourth by Spanish Grant
MTM2004-03029.}
\address{J. A. Barcel\'o, ETSI de Caminos, Universidad Polit\'ecnica de
Madrid, 28040, Madrid, Spain.} \email{juanantonio.barcelo@upm.es}
\address{A. Ruiz, Departamento de Matem\'aticas, Universidad Aut\'onoma
de Madrid, 28049, Madrid, Spain.} \email{alberto.ruiz@uam.es}
\address{L. Vega, Departamento de Matem\'aticas,
Universidad del Pa\'{\i}s Vasco, Apartado 644, 48080 Bilbao,
Spain.} \email{luis.vega@ehu.es}
\address{M. C. Vilela, Departamento de Matem\'atica Aplicada,
Universidad de Valladolid, Plaza Santa Eulalia 9 y 11, 40005
Segovia, Spain.} \email{maricruz@dali.eis.uva.es}
 \maketitle{}
\begin{abstract}
We prove some local smoothing estimates for the Schr\"{o}dinger
initial value problem with data in $L^2(\mathbb{R}^d)$, $d \geq
2$ and a general class of potentials. In the  repulsive setting we have to assume just a power like decay $(1+|x|)^{-\gamma}$ for some $\gamma>0$. Also attractive perturbations are considered. The estimates hold for all time and as a consequence a weak dispersion of the solution is obtained. The proofs are based on similar estimates for the
corresponding  stationary Helmholtz equation and Kato H-smooth theory.\end{abstract}
\maketitle
\section{Introduction}
We consider Schr\"odinger Hamiltonians $-\Delta +V(x)$, with $V$ a
real potential, $x \in \mathbb{R}^d$ and $d \geq 2$.  We study some
dispersive estimates, also called local smoothing estimates, for
solutions of the evolution initial value problem
\begin{equation}
\label{sofia1} \left\{
\begin{array}{ll}
i\partial_tu - \Delta_xu +V(x)u=0, \;\;\; (x,t) \in \mathbb{R}^d
\times \mathbb{R} \;\;\;\; d \geq 2,
\\
u(x,0)=u_0(x).
\end{array}
\right.
\end{equation}
under different conditions on the potential $V$.

The first result is concerned with   repulsive potentials. Our
notion of repulsion involves conditions on the sign of $V$ and
$\partial_r V$ (the radial derivative of $V$),  and therefore is more restrictive to the one given for example in   \cite{A} for  which  just conditions on the positive part of  $\partial_r V$ are  assumed.
\begin{theorem}
\label{teorema1} Let $V$ be a real valued function satisfying the
following conditions:
\begin{itemize}
\item[i)] $ V\ge 0, $ \item[ii)] There exists $\gamma > 0$ such
that:
\begin{itemize}
\item If $d>3$, we suppose that there exist $ \eta >0$ such that
\begin{equation}
\label{crepulsivo} \gamma V(x)+ |x| \partial_r V(x) \leq
(1-\eta)\frac{(d-1)(d-3)}{2|x|^2};
\end{equation}
\item For $d=3$, we suppose that there exists  $W(t)\geq 0$ on
$(0,\infty)$ such that
\begin{equation}
\label{crepulsivo1} \sup_{|x|=t} \{ \gamma V(x)+ |x| \partial_r
V(x)\} \leq W(t),
\end{equation}
and
\begin{equation}\label{crepulsivo2}
 \int_0^\infty tW(t)dt <\frac{1}{2}.
\end{equation}
\end{itemize}
\end{itemize}

Let $H=-\Delta+V$, then for $d\ge 3,$ the Schr\"odinger operator
$e^{itH}$ satisfies the following estimates:
\begin{equation}\label{juan10}
\sup_{R>0}\frac{1}{R}\int_{B(0,R)}\int_{- \infty}^{\infty}
|D^{\frac{1}{2}}e^{itH}u_0(x)|^2dt\ dx \leq C\ \|u_0\|_2^2,
\end{equation}
where, for $\alpha \in \mathbb{C}$, the operator $D^{\alpha}$ is
defined by
$\widehat{D^{\alpha}f}(\xi)=|\xi|^{\alpha}\widehat{f}(\xi)$.

We also have
\begin{equation}\label{juan14}
\sup_{R>0}\frac{1}{R}\int_{B(0,R)}V^{\frac{1}{2}}(x)\int_{-\infty}^{\infty}
|e^{itH}u_0(x)|^2dt\ dx \leq C\ \|u_0\|_2^2.
\end{equation}

\end{theorem}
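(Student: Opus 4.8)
The plan is to derive \eqref{juan10} and \eqref{juan14} from Kato's theory of $H$-smooth operators, which reduces such space-time bounds for $e^{itH}$ to uniform a priori estimates for the stationary Helmholtz equation
\[
-\Delta u+Vu-(\lambda\pm i\epsilon)u=f,\qquad \lambda\in\mathbb{R},\ \epsilon>0 .
\]
I would work in the Morrey--Campanato (Agmon--Hörmander) spaces with $\|g\|_X^2=\sup_{R>0}\frac1R\int_{B(0,R)}|g|^2$ and the companion norm $\|f\|_{X^\ast}=\sum_{j}\bigl(2^{j}\int_{2^{j-1}\le|x|<2^{j}}|f|^2\bigr)^{1/2}$, for which $|\int f\bar g|\le\|f\|_{X^\ast}\|g\|_X$. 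Since $V\ge0$ forces $H\ge0$, Stone's formula together with Plancherel in $t$ expresses the left-hand sides of \eqref{juan10} and \eqref{juan14} as integrals over $\lambda\in(0,\infty)$ of the squared $X$-norms of $|D|^{1/2}$, respectively $V^{1/4}$, applied to $\bigl[(H-\lambda-i0)^{-1}-(H-\lambda+i0)^{-1}\bigr]u_0$. Hence it suffices to prove, uniformly in $\lambda$ and $\epsilon$, that the solution of the displayed equation obeys an estimate of the form $\|\nabla u\|_X^2+|\lambda|\,\|u\|_X^2+\|V^{1/4}u\|_X^2\lesssim\|f\|_{X^\ast}^2$ (plus control of an outgoing-radiation term), and that the gradient bound can be upgraded to $\||D|^{1/2}u\|_X^2$; the latter follows from a Littlewood--Paley splitting at frequency $|\lambda|^{1/2}$, the $\lambda$-powers being reabsorbed by the spectral integral.

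For the Helmholtz estimate I would use the multiplier (Morawetz/Rellich) method with two test functions. Pairing the equation with $\overline u$ and separating real and imaginary parts yields the energy identity and the identity $\epsilon\|u\|_2^2=-\operatorname{Im}\int f\overline u$, which controls the $\epsilon$-term and the radiation. Pairing instead with $\overline{\mathcal{M}u}$, where
\[
\mathcal{M}u=a(|x|)\,\partial_r u+\tfrac{d-1}{2}\,\tfrac{a(|x|)}{|x|}\,u ,
\]
$a$ is a bounded, nondecreasing profile with $a'(r)\sim 1/R$ on $|x|\lesssim R$ and $a$ flat for $|x|\gtrsim R$ (the scale $R$ producing the supremum in $\|\cdot\|_X$), combining if necessary with an auxiliary multiplier tuned to the exponent $\gamma$, and taking $2\operatorname{Re}\int(\,\cdot\,)\,dx$, one obtains on the left: $\gtrsim\frac1R\int_{B(0,R)}|\partial_r u|^2$; a nonnegative tangential term $\int(\cdots)|\nabla_{\!S}u|^2/|x|^2$ (which constrains the admissible $a$); $\gtrsim\lambda\,\frac1R\int_{B(0,R)}|u|^2$; a nonnegative term $\gtrsim\frac1R\int_{B(0,R)}a'(|x|)\,V|u|^2$, which is the source of \eqref{juan14}; and a term $\frac1R\int_{B(0,R)}\kappa(x)\,|u|^2$ whose density $\kappa$ combines the free centrifugal contribution $\frac{(d-1)(d-3)}{2|x|^2}$ with $-(\gamma V+|x|\partial_r V)$. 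The right-hand side $2\operatorname{Re}\int f\,\overline{\mathcal{M}u}\lesssim\|f\|_{X^\ast}\bigl(\|\nabla u\|_X+\||x|^{-1}u\|_X\bigr)$ is then absorbed.

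This is where the hypotheses enter. For $d>3$, condition \eqref{crepulsivo} says precisely that $\kappa\ge\eta\,\frac{(d-1)(d-3)}{2|x|^2}\ge0$, so the $|u|^2$-term is coercive and stays on the left; together with the energy identity this closes the estimate for every $\lambda$, in particular down to $\lambda=0$. For $d=3$ the centrifugal density vanishes and there is nothing available to dominate the potential term; instead one absorbs $\frac1R\int_{B(0,R)}W(|x|)|u|^2$ into the good term $\frac1R\int_{B(0,R)}|\partial_r u|^2$ by a one-dimensional Hardy inequality along rays (using $|u(r\omega)|^2\le r\int_0^\infty|\partial_s u(s\omega)|^2\,ds$), which costs exactly the factor $\int_0^\infty tW(t)\,dt$, and \eqref{crepulsivo2} is the sharp smallness making this succeed. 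The passage $\epsilon\to0$ is harmless: the $\epsilon\|u\|_2^2$ term has the right sign, and by repulsivity the boundary terms at spatial infinity produced by $\mathcal{M}$ carry the correct sign and vanish in the limit.

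I expect the genuine obstacles to be two. First, choosing the profile $a$ — with its $\gamma$-dependence and its $R$-scaling — so that all the favorable terms survive \emph{simultaneously} while the right-hand side remains absorbable and all constants stay uniform in $\lambda,\epsilon,R$; getting the algebra of $\kappa$ to line up with the exact form of \eqref{crepulsivo} (the precise constants $\gamma$ and $(d-1)(d-3)/2$) is the delicate bookkeeping. Second, extracting the \emph{sharp} half-derivative bound $\||D|^{1/2}u\|_X$ uniformly, including the low-energy regime $\lambda\to 0$, which is what makes \eqref{juan10} a true smoothing gain rather than a mere local-energy estimate.
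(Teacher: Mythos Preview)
Your architecture is right---Kato smoothness reduced to uniform resolvent bounds, and the resolvent bounds obtained by Morawetz--Rellich multipliers in the $X$/$X^*$ scale---and your identification of the role of \eqref{crepulsivo} as the sign condition on the ``centrifugal'' density is exactly what happens in the paper. Two steps, however, are handled quite differently there, and in the first one your sketch has a genuine gap.

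\textbf{From $\nabla$ to $D^{1/2}$.} Your proposed passage from $\|\nabla u\|_X$ and $|\lambda|^{1/2}\|u\|_X$ to $\|D^{1/2}u\|_X$ via a Littlewood--Paley split at frequency $|\lambda|^{1/2}$, ``the $\lambda$-powers being reabsorbed by the spectral integral'', does not match Kato's criterion, which asks for a bound \emph{uniform} in $\lambda$ (there is no spectral integral in which to hide a divergence as $\lambda\to 0$). The paper avoids this entirely: it replaces the sharp weight $R^{-1}\chi_{B(0,R)}$ by the smooth $\psi_R(x)=R^{-1}(1+|x|^2/R^2)^{-1}$, proves the stronger \emph{supersmoothness} bound
\[
\|\psi_R^{1/2}D^{1/2}R_H(\lambda\pm i\epsilon)D^{1/2}\psi_R^{1/2}\|_{L^2\to L^2}\le C
\]
with $C$ independent of $R,\lambda,\epsilon$, and obtains this by \emph{complex interpolation} between $z=0$ and $z=1$ in $D^{z+i\eta}R_HD^{1-z-i\eta}$. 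At the endpoint this reduces to $\|\nabla R_H f\|_{L^2(\psi_R)}\lesssim\|f\|_{L^2(\psi_R^{-1})}$, which follows from the $X$--$X^*$ gradient bound. The imaginary powers $D^{i\eta}$ and the Riesz transforms are dealt with by the key observation that $\psi_R$ and $\psi_R^{-1}$ are $A_2$ weights with constant \emph{independent of $R$}. This interpolation/$A_2$ device is the mechanism that produces the sharp half derivative, and it is absent from your sketch.

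\textbf{The case $d=3$.} Your absorption of the $W$-term into $\int|\partial_r u|^2$ by a Hardy inequality along rays is not the paper's mechanism, and it is not clear it yields the sharp threshold $\int_0^\infty tW(t)\,dt<\tfrac12$. The paper instead constructs (Appendix~2) a radial multiplier $\Phi$ with $\Delta^2\Phi=-c_1R^{-3}\chi_{(0,R)}-W(|x|)/|x|$ and $0<\Phi'<\kappa<\tfrac12$; the $W$-term is then absorbed by the \emph{surface} quantity $\sup_{R}R^{-2}\int_{|x|=R}|u|^2\,d\sigma_R$ through
\[
\int_{\mathbb{R}^3}\frac{W(|x|)}{|x|}|u|^2\,dx=\int_0^\infty\frac{W(t)}{t}\int_{|x|=t}|u|^2\,d\sigma_t\,dt\le\Bigl(\int_0^\infty tW(t)\,dt\Bigr)\sup_R\frac{1}{R^2}\int_{|x|=R}|u|^2\,d\sigma_R,
\]
which is exactly where \eqref{crepulsivo2} enters.

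A smaller correction: for \eqref{juan14} the resolvent input actually used is $\int \tfrac{V}{|x|}|R_Hf|^2\lesssim\|f\|_{X^*}^2$ (not $\|V^{1/4}u\|_X$), combined with $\psi_R(x)\le|x|^{-1}$ and the same complex-interpolation scheme with $V^{(1+i\gamma)/2}$ in place of $D^{1+i\gamma}$.
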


Estimate (\ref{juan10}), usually known as the local smoothing
estimate (see \cite{CS}, \cite{S}, \cite{V} and \cite{RV}),
goes back to the work of Kato and Kruzhkov-Faminski in the context of KdV
equations. Notice that here we obtain an estimate global in time.

 In \cite{BRV} it is proved that  for $d\geq3$ and  potentials $V$ satisfying   the conditions
 (\ref{crepulsivo}) and (\ref{crepulsivo1}) with $\gamma =0$ and (\ref{crepulsivo2})
\begin{equation}
\label{pp}
\begin{array}{ll}
\sup_{R>0}\frac{1}{R}\int_{B(0,R)}\int_{- \infty}^{\infty}
|\nabla_x e^{itH}u_0(x)|^2dt\ dx
\vspace{0.4cm}
\\
+\int_{\mathbb{R}^d} (
\partial_rV)_-(x) \int_{-\infty}^\infty | e^{itH}u_0(x)|^2dt\ dx
 \leq C\ \|u_0\|_{H^{1/2}(\mathbb{R}^d)}^2,
 \end{array}
 \end{equation} where $(\partial_r
V)_-$ is the negative part of $\partial_r V$.
Notice that the estimate requires the initial
 datum to be in the non-homogeneous Sobolev space $H^{\frac{1}{2}}(\mathbb{R}^d)$ which means that from the point of view of small frequencies the result is not so strong. In a sense this can not be avoided because  in \cite{VV} (see also \cite{GMP}) it is proved that under some conditions on the potential the right hand side of (\ref{pp}) can be replaced by
 $$\|u_0\|^2_{\dot{H}_V^{1/2}}: =\|(  { - \Delta
+V})^{1/4}u_0\|^2_{L^2(\mathbb{R}^d)},$$ and that this is optimal.
In our new result (\ref{juan10}) we write the homogeneous derivative  and therefore there is no difference of behaviour between low and high frequencies. We also want to mention that we  answer affirmatively  the question
 posed  in  Remark 4 of \cite{BRV}     for  $L^2$  initial
 data ( see Remark \ref{ultimoremark} to  be  more precise).

 The  approach we follow in this paper  is    based also on  multiplier technics but, as opposite to \cite{BRV}, the estimates  for  the  evolution  equation rely on  estimates  for  the  resolvent  and  the Kato'  smoothing  theory.
In \cite{A} and \cite{BRV} potentials homogeneous of degree zero
are allowed. However Theorem \ref{teorema1} does not apply in this
case.

Some typical examples of potentials satisfying the conditions of
Theorem \ref{teorema1} are, for $V_{\infty}\geq 0$:
\begin{itemize}
\item[i)]  The potential
$V(x)=\frac{V_{\infty}(\frac{x}{|x|})}{|x|^{\gamma}}$ where $
0<\gamma<2 $   and the potential $V(x)=\frac{c}{|x|^{2}}$ with small $c>0$.
 \item[ii)]
$V(x)=\frac{V_{\infty}(\frac{x}{|x|})}{(1+|x|^2)^{\frac{\alpha}{2}}}$
 with $V_{\infty}$
bounded  and  $0 <\alpha$.
\end{itemize}

In the case $V_\infty >c_0>0$ estimate (\ref{juan14}) is
particularly relevant. In fact, assume  for instance
$V(x)=1/|x|^{\alpha}$ with $0<\alpha<2$. As  a consequence (see \cite{rs}, v.IV, page 147)  we have that for $\delta >0$, the following  estimate  for  the  spectral projection operators holds
$$
\sup_{R>0,\; ||f||_{L^2}=1}\frac{1}{R}
\int_{B(0,R)}|P_{(0,\delta]}f(x)|^2\frac{dx}{|x|^{\frac{\alpha}{2}}}
\le C\ \delta,
$$
where $C$ is an absolute constant.
A similar estimate in the  free case  would  mean,
roughly speaking, that  the usual Fourier transform  $\widehat
f(\xi)$ behaves as  $|\xi|^{\beta}$  for $\xi$ close to the origin, where
$$
\beta\ge -\frac{n}{2}+\frac{1}{2}\left(1-\frac{\alpha}{2}\right),
$$
if $0<\alpha<2.$ Since not every   function in $L^2(\mathbb{R}^d)$
  satisfies this condition, the above inequality has to be understood as  a
special feature of the generalized Fourier transform  associated
to the potential $V$.
It would be interesting to know if Strichartz estimate holds in
this case. Recall that the  existence of   $x_0$ such that
$V_\infty(\frac{x_0 }{|x_0|})=0$ implies that  Strichartz  estimates do not hold, see \cite{GVV}
and \cite{PV2}.

Our next result concerns with potentials which are not necessarily
repulsive. We first consider the case where
the radial variation of the attractive part  is not too big. We have the following result.

\begin{theorem}
\label{teorema2} Let $V$ and $n$ be two real valued functions.
Assume that $V$ satisfies the conditions i) and ii) of the Theorem
\ref{teorema1}, and $n$ satisfies:
\begin{itemize}
\item[iii)] $n=n_1+n_2< 0$ where $n_1 \in
L^{\infty}(\mathbb{R}^d)$ and $n_2$  is such that
\begin{equation}
\label{sobolev} \int_{\mathbb{R}^d}|n_2(x)|\ |g(x)|^2\ dx\le c_1
\int_{\mathbb{R}^d}|\nabla g(x)|^2\ dx, \qquad
\end{equation}
for  some $c_1$  with $0<c_1<1$.

\item[iv)] Given $\rho>0$, take $j_0\in\mathbb{Z}$
such that $2^{j_0}<\rho\le 2^{j_0+1}$ , then
\begin{equation}
\label{atractivo} \begin{array}{ll} \beta_{\rho}=\sum_{j\ge
j_0}^{\infty}2^{j+1}\sup_{x\in C_j}\frac{\nabla n(x)\cdot
x}{|x||n(x)|} + \rho\sup_{x\in B(0,\rho)}\frac{\nabla n(x)\cdot
x}{|x||n(x)|}

\vspace{0.4cm}

\\ \textrm{satisfies} \;\; \; \left\{
\begin{array}{ll} \beta_{\rho} <\frac{1}{4} \hspace{2.5cm} \;\;\; d>3

\vspace{0.3cm}

\\\beta_{\rho}+ \int_0^\infty tW(t)dt<\frac{1}{2}  \;\;\;\;\; d=3, \end{array} \right.
\end{array}
\end{equation}
where $C_j=\{x\in\mathbb{R}^d \; : \;\; 2^j < |x| \leq
2^{j+1}\},\;\;\; \forall j\in\mathbb{Z}.$
\end{itemize}
Let $H=-\Delta+V+n,$ then for $d\ge 3$ and $\tau _0 >0$, the
Schr\"odinger operator $e^{itH}$ satisfies the following
estimates:
\begin{equation}
\label{sofia9a} \sup_{R\ge\rho}\frac{1}{R}\int_{B(0,R)}\int_{-
\infty}^{\infty}
|D^{\frac{1}{2}}e^{itH}\mathcal{P}_{\tau_0}u_0(x)|^2 dtdx \leq
C(\beta_{\rho},\tau_0)\ \|u_0\|_2^2,
\end{equation}
\begin{equation}
\label{sofia9b} \tau
\sup_{R\ge\rho}\frac{1}{R}\int_{B(0,R)}\int_{- \infty}^{\infty}
|e^{itH}\mathcal{P}_{\tau}u_0(x)|^2 dtdx \leq
C(\beta_{\rho},\tau_0) \|u_0\|_2^2 \hspace{1cm} \tau \geq \tau_0,
\end{equation}
where $\mathcal{P}_{\tau}$ is the spectral projection operator associated
with   $[\tau,\infty).$

\end{theorem}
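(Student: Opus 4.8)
The plan is to deduce the evolution estimates (\ref{sofia9a}) and (\ref{sofia9b}) from uniform resolvent (limiting absorption) bounds for $H=-\Delta+V+n$ on the spectral subspace $\mathcal P_{\tau_0}L^2$, via Kato's $H$-smoothness theory, following the same scheme as in the proof of Theorem \ref{teorema1} but now inserting the attractive perturbation $n$ directly into the multiplier identity rather than treating it by a resolvent (Born) expansion.

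\textbf{Step 1 (reduction to a Helmholtz estimate).} First I would use the abstract principle that if $T$ is closed and $\sup_{\operatorname{Im}z\neq0}\|T\,\mathcal P_{\tau_0}R_H(z)\mathcal P_{\tau_0}\,T^{*}\|<\infty$, then $\int_{-\infty}^{\infty}\|Te^{itH}\mathcal P_{\tau_0}u_0\|_2^2\,dt\lesssim\|u_0\|_2^2$. Taking $T=R^{-1/2}\chi_{B(0,R)}D^{1/2}$ for (\ref{sofia9a}) and $T=\tau^{1/2}R^{-1/2}\chi_{B(0,R)}$ for (\ref{sofia9b}), and turning the operator bound into a weighted resolvent estimate by a $TT^{*}$/duality argument against the Agmon--H\"ormander (Morrey--Campanato) space with norm $\|g\|_{X}=\sup_{R\ge\rho}\bigl(\tfrac1R\int_{B(0,R)}|g|^2\bigr)^{1/2}$ and its predual, the theorem reduces to proving: for every $\lambda\ge\tau_0$, $\varepsilon\neq0$, the solution $u=R_H(\lambda+i\varepsilon)f$ obeys
\[
\bigl\|D^{1/2}u\bigr\|_{X}^{2}+\lambda\,\|u\|_{X}^{2}+\bigl\|(V+|n|)^{1/2}u\bigr\|_{X}^{2}\ \le\ C(\beta_\rho,\tau_0)\,\|f\|_{X_{*}}^{2},
\]
uniformly in $R\ge\rho$ (the constant must not degenerate as $\lambda\to\infty$).

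\textbf{Step 2 (the multiplier identity, the heart of the proof).} I would establish the displayed resolvent estimate by pairing $(H-\lambda\mp i\varepsilon)u=f$ with the first-order multiplier $\mathcal M u=\psi(r)\partial_r u+\tfrac{d-1}{2}\tfrac{\psi(r)}{r}u$ for a suitably regularized truncated profile $\psi$ (essentially $\psi(r)\sim\min(r,R)$), taking real and imaginary parts and combining with the standard identity $\operatorname{Im}\langle f,u\rangle=\varepsilon\|u\|_2^2$. Integration by parts yields: from $-\Delta$, the nonnegative bulk terms $\int\psi'|\partial_r u|^2$, the angular term with coefficient $\bigl(\tfrac{\psi}{r}-\tfrac{\psi'}{2}\bigr)\ge0$, the curvature term $\propto\tfrac{(d-1)(d-3)}{r^{2}}$ (absent when $d=3$), and $-\tfrac14\int\psi'''|u|^2$; from the spectral parameter, the good term $\tfrac{\lambda}{2}\int\psi'|u|^2$ (this is what produces $\lambda\|u\|_X^2$, hence the factor $\tau$ in (\ref{sofia9b})); from $V$, the term $-\tfrac12\int(\gamma V+r\partial_r V)\tfrac{\psi}{r}|u|^2$ together with $\tfrac12\int\psi' V|u|^2$ and lower-order pieces, controlled by (\ref{crepulsivo}) (for $d=3$ by (\ref{crepulsivo1})--(\ref{crepulsivo2})) exactly as in Theorem \ref{teorema1}; and from $n$, since $n<0$, the contribution $-\tfrac12\int(\psi n)'|u|^2=\tfrac12\int\psi'|n|\,|u|^2-\tfrac12\int\psi\,\partial_r n\,|u|^2$, whose first piece is a \emph{good} term and whose second piece, wherever $\partial_r n$ has the unfavorable sign, is bounded by $\tfrac12\sup_{|x|\ge\rho}\bigl(\psi(|x|)\tfrac{(\partial_r n)_+}{|n|}\bigr)\int|n|\,|u|^2$ and hence, after dyadic decomposition in $R$ and using the good term $\tfrac12\int\psi'|n|\,|u|^2$ as a reservoir, by $\beta_\rho$ times that reservoir. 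Condition iv) — $\beta_\rho<\tfrac14$ for $d>3$, and $\beta_\rho+\int_0^\infty tW\,dt<\tfrac12$ for $d=3$ — is exactly the smallness needed for this absorption; in $d=3$ the $n$-term and the $V$-term compete against the same sharp one-dimensional Hardy constant, which forces the joint budget.

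\textbf{Step 3 (functional setting and lower-order errors).} Condition iii) enters to make the above rigorous: writing $n=n_1+n_2$ with $n_1\in L^\infty$ and $n_2$ form-bounded with constant $c_1<1$ shows that $H$ is self-adjoint and bounded below and justifies the quadratic-form computations, while the $n_2$-form bound absorbs the residual $\int|n_2|\,|u|^2$ errors into $\int|\nabla u|^2$ and the bounded part $n_1$ together with the gap $\lambda\ge\tau_0$ control the remaining $\int|u|^2$ errors (at the cost of $C(\beta_\rho,\tau_0)$). Together with the repulsivity of $V$, this also gives the absence of eigenvalues in $[\tau_0,\infty)$ and the limiting absorption principle there, so that $R_H(\lambda\pm i0)$ exists on that range and $\mathcal P_\tau$ removes any negative bound states. \textbf{The main obstacle} is Step 2: choosing $\psi$ and organizing the dyadic summation so that the supremum over \emph{all} $R\ge\rho$ in the $X$-norm is genuinely produced while the positive bulk terms remain large enough to absorb simultaneously the $V$-contribution and the unfavorable part of the $n$-contribution; this is tightest in $d=3$, where the helpful curvature term $\propto(d-1)(d-3)$ vanishes and one must run the whole argument against the borderline Hardy inequality, which is the structural reason behind the coupled smallness condition in (\ref{atractivo}) and (\ref{crepulsivo2}).
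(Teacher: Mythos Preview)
Your outline is correct and matches the paper's approach: both reduce (\ref{sofia9a})--(\ref{sofia9b}) to a uniform resolvent estimate for $H=-\Delta+V+n$ on $\{\operatorname{Re}z\ge\tau_0\}$ via Kato $H$-supersmoothness, and both obtain that estimate by inserting the same multipliers $\Phi_R,\varphi_R$ of Section~2 into the identities (\ref{alberto3})--(\ref{alberto4}), with the $n$-contribution handled exactly as you describe (the paper's inequality (\ref{malo1}) bounds $\int \tfrac{\nabla n\cdot x}{|x|}|u|^2$ by $\beta_\rho\||n|^{1/2}u\|_{X_\rho}^2$, which is then absorbed into the reservoir $\tfrac{1}{2R}\int_{B(0,R)}|n||u|^2$ produced by $\varphi_R$ since $n<0$); the $d=3$ joint budget is also as you say.

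One technical point your Step~1 elides and that the paper handles explicitly: the multiplier identity yields a bound on $\|\nabla u\|_{X_\rho}$, not on the nonlocal quantity $\|D^{1/2}u\|_{X}$ that you wrote as the target. To bridge this, the paper does \emph{not} take $T=R^{-1/2}\chi_{B(0,R)}D^{1/2}$ (the sharp cutoff is not an $A_2$ weight), but rather $T=\psi_R^{1/2}D^{1/2}$ with the smooth weight $\psi_R(x)=(R(1+|x|^2/R^2))^{-1}\in A_2$ uniformly in $R$; supersmoothness of $T$ is then proved by Stein complex interpolation between $\psi_R^{1/2}D^{1+i\eta}R_H D^{-i\eta}\psi_R^{1/2}$ and its adjoint, which via boundedness of $D^{i\eta}$ and of Riesz transforms on $L^2(\psi_R^{\pm1})$ reduces to $\|\nabla R_H f\|_{L^2(\psi_R)}\le C\|f\|_{L^2(\psi_R^{-1})}$, and only then to the $X_\rho$-estimate $\|\nabla R_H f\|_{X_\rho}\le C\|f\|_{X_\rho^*}$ (your Step~2). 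With the sharp cutoff this chain would break. Apart from this, your proposal is the paper's proof.
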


We observe that condition (\ref{atractivo}) remains true if $n(x)$
is changed into $\mu n(x)$ for any $\mu >0$. It also applies to
functions homogeneous of degree zero. Typical examples of such
$n(x)$ are as follows:

Take $\omega(x)=\mu \left(1-e^{g(\infty)-g(|x|)} \right)  ,$ where
$\mu>0$ and $g$ is a radial function such that
$$
0\le  g'\le\frac{b}{(1+r)^{\gamma}} \qquad \forall r\ge 0,
$$
with $\gamma>1$ and $b=b(\gamma)>0$ small enough. Under these
conditions, we can apply  Theorem \ref{teorema2} to $n=\omega-\mu.$
As a consequence, we get estimates similar to (\ref{sofia9a}) and
(\ref{sofia9b}) for $e^{itH}\mathcal{P}_{(\mu,\infty)}u_0$ with
$H=-\Delta+V+\omega$.

In the above example $\omega\leq 0$, therefore for $\mu$  big enough non trivial eigenvalues can be expected   and in that case estimates (9)  and  (10)  can  not
be   extended  to  all $\tau$. Notice however than one would expect the estimate on
 the spectral projection to be true for $\mathcal{P}_{(\tau,\infty)}$ with $\tau>0$ and not
  just for $\mathcal{P}_{(\mu,\infty)}$. This motivates our final result where we   use a
compactness argument that  follows from a uniqueness
theorem due to Ikebe and Saito \cite{IS}.  Some  extra  assumptions are necessary, and in particular the result does not apply to potentials homogeneous of degree zero.  An extra bonus is that we can
consider  the case $d \geq 2$. We prove the following theorem.

\begin{theorem}
\label{teorema3} Let $V_1$ and $V_2$ be two real valued functions
satisfying that there exist  two constants $a>0$ and
$\gamma>0$ such that
\begin{eqnarray}
\label{c1s} |V_1|(x) &\le&
 \frac{a}{(1+|x|)^{\gamma}},
\\
\label{c2s} |V_2|(x) &\le&
 \frac{a}{(1+|x|)^{\gamma+1}},
\\
\label{c3s}
\partial_r V_1(x)
&\le&
 \frac{a}{(1+|x|)^{\gamma+1}},
\end{eqnarray}
and $H=-\Delta+V_1+V_2.$

Then for $d\ge 2$, $\tau_0 >0,$ and $\alpha>0$
  the following
estimates hold:
\begin{equation}
\label{sofia12a} \int_{- \infty}^{\infty}
\|D^{\frac{1}{2}}e^{itH}\mathcal{P}_{\tau_0}u_0(x)\|^2_{L^2((1+|x|)^{-1-\alpha})}dt
\leq C_1(\tau_0)\ \|u_0\|_2^2,
\end{equation}
\begin{equation}
\label{sofia12b} \tau \int_{- \infty}^\infty
\|e^{itH}\mathcal{P}_{\tau }u_0(x)\|^2_{L^2((1+|x|)^{-1-\alpha})}dt
\leq C_2(\tau_0) \|u_0\|_2^2, \hspace{1cm} \tau \geq \tau_0.
\end{equation}

\end{theorem}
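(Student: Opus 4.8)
The plan is to derive both estimates from uniform bounds for the weighted resolvent of $H=-\Delta+V_1+V_2$ by means of Kato's theory of $H$-smooth operators, establishing those bounds separately in three ranges of the spectral parameter; the genuinely new range is the intermediate one, where a compactness argument resting on the Ikebe--Saito uniqueness theorem \cite{IS} is used. Write $w(x)=(1+|x|)^{-1-\alpha}$ and $R_H(z)=(H-z)^{-1}$. By Kato's theorem, estimate \eqref{sofia12a} is equivalent to the uniform bound
\[
\big\|\,w^{1/2}D^{1/2}\,\mathcal{P}_{\tau_0}R_H(z)\mathcal{P}_{\tau_0}\,D^{1/2}w^{1/2}\big\|\le C(\tau_0)
\]
and \eqref{sofia12b} follows from the companion bound $\|w^{1/2}\mathcal{P}_{\tau_0}R_H(z)\mathcal{P}_{\tau_0}w^{1/2}\|\le C(\tau_0)\langle z\rangle^{-1/2}$, both taken over $\operatorname{Re}z\ge\tau_0$ and $\operatorname{Im}z\ne0$; the extra decay in the spectral parameter is what produces the weight $\tau$ in \eqref{sofia12b} once that parameter is restricted to $[\tau,\infty)$. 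The projection $\mathcal{P}_{\tau_0}$ serves only to keep the spectral parameter at a fixed positive distance from the threshold $0$, where these sandwiched resolvents are known to degenerate.

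In the high-energy range $\operatorname{Re}z\ge\lambda_0$, with $\lambda_0$ large, the two bounds (including the gain $\langle z\rangle^{-1/2}$) are supplied by the Helmholtz resolvent estimates of the preceding sections, valid under \eqref{c1s}--\eqref{c3s}: one tests $(H-z)u=f$ against the multipliers used there and absorbs the contributions of $V_1$ and $V_2$ using \eqref{c1s}, \eqref{c2s} and the one-sided radial decay \eqref{c3s} of $\partial_rV_1$, the size of the spectral parameter helping to control the perturbation. The model is the free resolvent $R_0(z)$, for which $\|w^{1/2}D^{1/2}R_0(z)D^{1/2}w^{1/2}\|\lesssim1$ and $\|w^{1/2}R_0(z)w^{1/2}\|\lesssim\langle z\rangle^{-1/2}$ hold uniformly in $z$, both being restatements of the Agmon--H\"ormander restriction inequality $\|\widehat{w^{1/2}g}\|_{L^2(|\xi|=\rho)}\lesssim\|g\|_2$.

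For $\operatorname{Re}z$ in a fixed compact band $[\tau_0,\lambda_0]$ we argue by compactness. Conditions \eqref{c1s}--\eqref{c3s} place $V_1+V_2$ in the long-range-plus-short-range class treated by Ikebe and Saito \cite{IS}, so by their theorem the limiting absorption principle holds and $H$ is absolutely continuous on $(0,\infty)$; in particular $H$ has no eigenvalue in $[\tau_0,\infty)$, and this absence is exactly what permits the estimates to be stated for $\mathcal{P}_\tau$ rather than only for $\mathcal{P}_{(\mu,\infty)}$ as in the example following Theorem~\ref{teorema2}. Hence $z\mapsto w^{1/2}R_H(z)w^{1/2}$ extends from $\{\operatorname{Im}z>0\}$ to a continuous $\mathcal{B}(L^2)$-valued function on the compact set $\{\operatorname{Re}z\in[\tau_0,\lambda_0],\ \operatorname{Im}z\ge0\}$, and likewise on $\{\operatorname{Im}z\le0\}$; being continuous on a compact set, it is bounded there. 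An elliptic-regularity bootstrap inside the limiting absorption estimate upgrades this to the $D^{1/2}$-sandwiched form. For $\operatorname{Re}z<\tau_0$ the factor $\mathcal{P}_{\tau_0}$ makes the resolvent bounded in $z$ outright. Gluing the three ranges yields the uniform resolvent bounds above — and in particular that $\sigma(H)\cap[\tau_0,\infty)$ is purely absolutely continuous — and Kato's theorem then gives \eqref{sofia12a} and \eqref{sofia12b} with constants depending on $\tau_0$.

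The main obstacle is this intermediate step: checking that \eqref{c1s}--\eqref{c3s} fit the Ikebe--Saito hypotheses, extracting from their theorem both the absence of eigenvalues embedded in $[\tau_0,\lambda_0]$ and the norm-continuity of the weighted resolvent up to the real axis, and then carrying $D^{1/2}$ through the limiting absorption estimate in the weighted spaces. It is precisely because this part of the argument bypasses the Morawetz multiplier estimate — whose error term, proportional to $\tfrac{(d-1)(d-3)}{2|x|^2}$, has the wrong sign when $d=2$ — that Theorem~\ref{teorema3} can be stated for all $d\ge2$ and not only for $d\ge3$.
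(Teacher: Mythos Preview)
Your architecture matches the paper's: derive \eqref{sofia12a}--\eqref{sofia12b} from uniform weighted resolvent bounds on $\tau\ge\tau_0$ via Kato supersmoothness, split into a high-energy range handled by multipliers and a compact intermediate band $[\tau_0,B(\tau_0)]$ handled by an Ikebe--Saito compactness/contradiction argument (Lemma~\ref{lema1}, Proposition~\ref{proposicion2}), then glue (Theorem~\ref{tsinpeque}). Two points in your sketch are off.

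First, the high-energy bound is not inherited from ``the preceding sections'': Theorems~\ref{trepulsivo} and \ref{tatractivo} require sign/repulsivity hypotheses that \eqref{c1s}--\eqref{c3s} do not grant. The paper proves a dedicated multiplier estimate (Proposition~\ref{proposicion1}) in which every potential contribution is bounded by $B(\tau_0)\|u\|^2_{X_{\tau_0}}$ and then absorbed by $\tau\|u\|^2_{X_{\tau_0}}$ once $\tau\ge B(\tau_0)$.

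Second, your $d=2$ diagnosis is backwards. The Morawetz multiplier \emph{is} used for $d=2$ in the high-energy range; what rescues it is that the wrong-signed bilaplacian term $\tfrac14\int_{|x|>R}|u|^2/|x|^3\,dx$ is bounded by $\tfrac{1}{4\tau_0^2}\|u\|^2_{X_{\tau_0}}$ and can therefore also be swallowed by $\tau\|u\|^2_{X_{\tau_0}}$ once $\tau_0$ and $B(\tau_0)$ are chosen with $1/(2B(\tau_0)\tau_0^2)<1$ (see the end of the proof of Proposition~\ref{proposicion1}). The Ikebe--Saito compactness step works for all $d\ge2$ independently of this.

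Finally, the passage from the gradient resolvent estimate to the $D^{1/2}$-sandwiched form is not an ``elliptic bootstrap'' but the complex-interpolation argument of Section~\ref{evolucionrepulsivo}: interpolate between $\psi^{1/2}D^{1+i\eta}R_H D^{-i\eta}\psi^{1/2}$ and its dual, using that $(1+|x|)^{\mp(1+\alpha)}\in A_2$ so that $D^{i\eta}$ is bounded on the weighted $L^2$ spaces.
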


Consider as before $\omega(x)=\mu(e^{g(|x|)-g(\infty)}-1),$ where
 $\mu>0$ and $g$ is a radial function such that
 $$0\le g'\le\frac{c}{(1+r)^{\beta}} \qquad \forall r\ge 0.$$
Then if $\beta>1$ , $c>0$ and  we put $V_1=\omega$  we can apply
 Theorem \ref{teorema3}. As a
 consequence, we get estimates similar to (\ref{sofia12a}) and
 (\ref{sofia12b}) for $e^{itH}\mathcal{P}_{(\tau,\infty)}u_0$ with
 $H=-\Delta+\omega$ and $\tau>0$.

Notice that we can consider repulsive perturbations which are long range. In
particular, we extend and improve some of the results  in \cite{ISch}.
On the other hand we have to exclude the case $\tau_0=0$. This
is obvious because we can consider potentials with decay as
$\frac{C}{|x|^2}$, and therefore there can exist 0-eigenfunctions. Take for example  $u(x)=(1+|x|^2)^{\lambda}$  with $\lambda<- d/2$  and  $ V(x)=\frac{\Delta u}{u}$.

As it  was pointed out before, the proofs of the above  theorems are based on similar estimates
for the corresponding stationary Helmholtz equations and  the general Kato's
argument on smoothing operators  as  in \cite{ky} and \cite{RS}. Actually  we prove the so called   supersmoothing estimates which are  stronger, see \cite{ky}.    The procedure  we use   to obtain
  these  stationary estimates  are basically the multiplier
method, see \cite{LS}, \cite{M} and \cite{R}. Similar  estimates  were obtained  in  \cite{PV1}.

We  prove Theorem \ref{teorema1}  in  section  2,
 Theorem \ref{teorema2} in  section 3 and Theorem
  \ref{teorema3} in  section 4.  For  completeness we
   include     Appendix 1 with some useful identities   and
   Appendix 2  with some calculations that  will be needed in the proof of Theorem \ref{teorema1}.

\vskip 1cm \noindent {\bf Notation.}


  We shall make use of the spaces introduced in \cite{AH} which are given by the following norms
$$\|u\|^2_{X}= \sup_{R>0}\frac{1}{R} \int_{B(0,R)} |u(x)|^2dx.$$
We  shall replace the  norm in  the corresponding  predual  space for  the equivalent expression:
$$\|f\|_{X^*}=\sum_{j \in\mathbb{Z}}\left(2^{j+1} \int_{C_j}|f(x)|^2dx\right)^{\frac{1}{2}}.$$
Notice that
$$
\int_{\mathbb{R}^d}f(x)g(x)dx \le C\ \|f\|_{X}\|g\|_{X^*}.
$$ where $C$ is a positive constant depending only on  $d$.

For any $\rho>0$ such that $2^{j_0}<\rho\le 2^{j_0+1},$
we write
$$\|u\|^2_{X_{\rho}}= \sup_{R\ge\rho}\frac{1}{R} \int_{B(0,R)}
|u(x)|^2dx,$$ and
$$
\|f\|_{X_{\rho}^*}=\sum_{j\ge j_0}\left(2^{j+1}
\int_{C_j}|f(x)|^2dx\right)^{\frac{1}{2}}
+\left(\rho\int_{B(0,\rho)}|f(x)|^2dx\right)^{\frac{1}{2}}.
$$
In this case, we also have
$$
\int_{\mathbb{R}^d}u(x)f(x)dx \le C\
\|u\|_{X_{\rho}}\|f\|_{X_{\rho}^*}.
$$

\section{Repulsive potentials}
We start  by proving  a priori  estimates for  the  resolvent of  the operator $H$ and  then
we proceed to prove  Theorem \ref{teorema1}

\subsection{Estimates for the resolvent}

\begin{theorem}
\label{trepulsivo} Let $V$ be a real valued function satisfying the
conditions of  Theorem \ref{teorema1}, and let $u$  be  a
solution  of  the  equation
\begin{equation}
\label{ecuacionrepulsiva} -\Delta u+V(x)\ u\pm i\epsilon u - \tau
u=f, \quad \epsilon\neq 0,\ x\in\mathbb{R}^d\ (d\ge 3).
\end{equation}
Then, for any $\tau\in \mathbb{R},$   the following a
priori estimate holds
\begin{equation}\label{basic}
\begin{array}{ll}  \|\nabla u\|^2_{X} +  \max\{0,\tau\} \|u\|^2_{X} +
(d-3)\int_{\mathbb{R}^d}\frac{|u(x)|^2}{|x|^3}dx +
\int_{\mathbb{R}^d}\frac{V(x)}{|x|}|u(x)|^2 dx

\vspace{0.4cm}

\\
  +\sup_{R>0}\frac{1}{R^3}\int_{B(0,R)}|u(x)|^2dx
 +
\chi(d)\int_{\mathbb{R}^3}\frac{W(|x|)}{|x|}|u(x)|^2dx \leq C\
\|f\|_{X^*}^2,
 \end{array}
\end{equation}

where $C$ is a constant independent of $\epsilon$ and $\tau$, $W$
is the function of Theorem \ref{teorema1} and $\chi(d)$ is  defined by
\begin{equation}\label{tonto7}
\hspace{3cm} \chi(d)= \left\{ \begin{array}{ll} 1 \hspace{1.2cm}
\text{if} \;\;\; d=3, \\ 0 \hspace{1.2cm} \text{if} \;\;\; d\neq
3.
\end{array} \right.
\end{equation}
\end{theorem}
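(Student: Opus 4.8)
The plan is to prove the a priori estimate \eqref{basic} by the classical multiplier (Morawetz/Rellich type) method applied to the stationary equation \eqref{ecuacionrepulsiva}. First I would multiply the equation by suitable test functions built from $u$, $\bar u$ and a radial weight, integrate over $\mathbb{R}^d$, and take real and imaginary parts. The imaginary part gives control of the energy dissipation term involving $\epsilon$, while the real part, after integration by parts, produces a bulk of positive quadratic terms in $\nabla u$, $u/|x|$, and the potential terms, plus boundary-type and error terms. The natural multipliers here are $\varphi(|x|)\,\partial_r u$ (giving a Rellich--Morawetz identity) together with a zeroth-order multiplier $\psi(|x|)u$; choosing $\varphi$ so that $\varphi'\ge 0$, $\varphi$ bounded (e.g.\ $\varphi(r)=r/(R+r)$ or truncations thereof, so as to produce the $\sup_R \frac1R\int_{B(0,R)}$ and $\sup_R\frac{1}{R^3}\int_{B(0,R)}$ quantities in the $X$-norm) is the key technical choice.

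The second step is to absorb the potential contributions. When one differentiates the weighted quantity $\int \varphi\, \partial_r(|u|^2) V$ by parts one gets exactly terms of the form $\int(\gamma V + |x|\partial_r V)|u|^2 |x|^{-2}\,(\text{weight})$ up to lower-order pieces; here the repulsivity hypotheses \eqref{crepulsivo} (for $d>3$) and \eqref{crepulsivo1}--\eqref{crepulsivo2} (for $d=3$) enter precisely to dominate this by the good term $(d-1)(d-3)|u|^2/(2|x|^2)$ coming from the angular part of $\nabla u$, or, in $d=3$ where that term degenerates, by the one-dimensional Hardy/Sobolev inequality on each sphere combined with $\int_0^\infty t W(t)\,dt < 1/2$. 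This is the step I expect to be the main obstacle: one must track all constants carefully so that the factor $1-\eta$ (resp.\ the strict inequality in \eqref{crepulsivo2}) leaves a strictly positive leftover that still controls $\int V|u|^2/|x|$ and, in $d=3$, the $\int W(|x|)|u|^2/|x|$ term, while simultaneously the weights are chosen uniformly in $R$ and the argument must be done on truncated/regularized weights and then passed to the limit.

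The third step is to upgrade from the weighted $L^2$ bounds obtained with a fixed cutoff to the scale-invariant $X^*\to X$ statement: one runs the identity with the family of weights indexed by $R$, takes the supremum, and handles the right-hand side $\int f\bar u$ (or $\int f\,\partial_r\bar u$) by Cauchy--Schwarz and the duality inequality $\int fg \le C\|f\|_X\|g\|_{X^*}$ recalled in the Notation section, then absorbs the resulting $\|\nabla u\|_X$ and $\|u/|x|\|$-type factors into the left-hand side. The term $\max\{0,\tau\}\|u\|_X^2$ comes from the $-\tau u$ term in \eqref{ecuacionrepulsiva}: it contributes $+\tau\int\varphi\,\partial_r(|u|^2)$ which, after integration by parts, is $-\tau\int(\varphi'+\tfrac{d-1}{r}\varphi)|u|^2$ with a favorable sign when $\tau\ge 0$ (and is simply discarded when $\tau<0$, explaining the $\max$).

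Finally, a standard density/approximation remark handles the fact that solutions of \eqref{ecuacionrepulsiva} with $\epsilon\neq0$ and $f\in X^*$ have enough decay (from the limiting absorption principle for $-\Delta+V$) to justify all the integrations by parts and the vanishing of boundary terms at infinity; the constant $C$ is then manifestly independent of $\epsilon$ and $\tau$ since it depends only on $\eta$, $d$, $\gamma$ and $\int_0^\infty tW\,dt$. I would organize the write-up so that the $d>3$ and $d=3$ cases share the same identity and differ only in how the critical angular term is estimated.
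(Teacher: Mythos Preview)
Your overall strategy---Morawetz/Rellich multiplier identities with radial weights, then supremum over $R$ and $X^*$--$X$ duality---is the same as the paper's. However, several structural elements of the actual proof are either absent from your sketch or handled differently, and one of your claims has a sign issue.

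First, the paper does not extract \eqref{basic} from a single identity. It proves three intermediate estimates and combines them according to the sign of $\tau$: one estimate valid for all $\tau$ (from identity (5.4) with a smooth multiplier $\Phi_R(x)=R\Phi(x/R)$, $\Phi(x)=(1+|x|^2)^{1/2}+|x|$ for $d>3$), leaving a residual $\tfrac14(|\epsilon|+\max\{0,\tau\})\|u\|_X^2$ on the right; a short one bounding $|\epsilon|\|u\|_X^2$ by $\|f\|_{X^*}^2+\sup_R R^{-3}\!\int_{B_R}|u|^2+\|\nabla u\|_X^2$, obtained from the imaginary-part identity (5.3) with a cutoff; and a third, valid only for $\tau\ge0$, using the Perthame--Vega pair $\nabla\Phi_R=\tfrac{x}{R}\chi_{\{|x|<R\}}+\tfrac{x}{|x|}\chi_{\{|x|\ge R\}}$, $\varphi_R=\tfrac{1}{2R}\chi_{\{|x|<R\}}$ in identity (5.5), which supplies $\tau\|u\|_X^2$ on the left. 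The closure of the $\epsilon$-term is not a one-line energy argument: one first bounds $|\epsilon|\int|u||\nabla u|$ via the global $L^2$ identities ($\varphi\equiv1$ in (5.2)--(5.3)) by $(|\epsilon|+\max\{0,\tau\})^{1/2}\int|f||u|$, and only then uses duality; your sentence ``the imaginary part gives control of the energy dissipation term'' hides exactly this step. Also, the $\tau\|u\|_X^2$ term arises directly as $+\tau\int\varphi_R|u|^2$ in (5.5), not through $\tau\int\varphi\,\partial_r(|u|^2)=-\tau\int(\varphi'+\tfrac{d-1}{r}\varphi)|u|^2$ as you write; your version has the wrong sign when $\tau\ge0$.

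Second, your proposed $d=3$ mechanism (``one-dimensional Hardy/Sobolev on each sphere'') is not what the paper does. For the first estimate the paper constructs in Appendix~2 a radial $\Phi$ solving $\Delta^2\Phi=-c_1R^{-3}\chi_{(0,R)}-W(|x|)/|x|$ on $\mathbb{R}^3$ with $0<\Phi'<\kappa<\tfrac12$; the condition $\int_0^\infty tW(t)\,dt<\tfrac12$ is precisely what allows such a $\Phi$ to exist, and the bilaplacian term then puts $\int W(|x|)|u|^2/|x|$ directly on the left. For the third estimate the paper controls $\int W|u|^2/|x|=\int_0^\infty (W(t)/t)\int_{|x|=t}|u|^2\,d\sigma\,dt\le\bigl(\int_0^\infty tW\,dt\bigr)\sup_R R^{-2}\!\int_{|x|=R}|u|^2\,d\sigma$ and absorbs it into the surface term produced by the singular multiplier. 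No spherical Hardy inequality is invoked.
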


\begin{remark}
From estimate (\ref{basic}) follows
\begin{equation}\label{agosto1}
\| \nabla R_H(\tau \pm i \epsilon) f\|_{X} \leq C \|f\|_{X^*}.
\end{equation}
 and
\begin{equation}\label{agosto2}
\|R_H(\tau\pm i\epsilon)f\|_{L^2\left(\frac{V(x)}{|x|}\right)}
\leq  C\ \|f\|_{X^*},
\end{equation}
where $C$ is  independent of $\epsilon \neq 0$ and $\tau$ and $R_H$ denotes
the resolvent operator  of $H=- \Delta + V$.
\end{remark}

\proof

 By a density argument we might assume  the  a priori condition $f\in L^2(\mathbb{R}^d).$

The proof is based on the following estimates for the solution of
(\ref{ecuacionrepulsiva}):

\begin{equation}\label{maricruz0}
\left\{ \begin{array}{lll} \|\nabla u\|_X^2 +\int_{\mathbb{R}^d}
\frac{V(x)}{|x|}|u(x)|^2\ dx +\sup_{R>0}\frac{1}{R^3}\int_{B(0,R)}
|u(x)|^2\ dx

\vspace{0.3cm}

\\+
(d-3) \int_{\mathbb{R}^d} \frac{|u(x)|^2}{|x|^3}dx + \chi(d)
\int_{\mathbb{R}^3}\frac{W(|x|)}{|x|}|u(x)|^2dx

\vspace{0.4cm}

\\ \leq C \|f\|^2_{X^*}+\frac{1}{4}(|\epsilon | + \max \{ 0,
\tau\})\|u\|^2_X, \end{array} \right.
\end{equation}

\begin{equation}
\label{tau<epsilon} |\epsilon| \|u\|^2_X \leq  \|f\|^2_{X^*} +
\sup_{R>0}\frac{1}{R^3}\int_{B(0,R)}|u(x)|^2dx +  \|\nabla
u\|^2_X.
\end{equation}
\begin{equation}\label{principal2repulsivo}
\left\{ \begin{array}{lll}  \|\nabla u\|^2_{X} + \tau\ \|u\|^2_{X}
 +
\int_{\mathbb{R}^d}\frac{V(x)}{|x|}|u(x)|^2\ dx
 +
 \sup_{R>0}\frac{1}{R^3}\int_{B(0,R)}|u(x)|^2dx

\vspace{0.3cm}

\\+
 \|V^{\frac{1}{2}}u\|^2_{X}+ (d-3) \int_{\mathbb{R}^d}
\frac{|u(x)|^2}{|x|^3}dx +
\chi(d)\int_{\mathbb{R}^3}\frac{W(|x|)}{|x|}|u(x)|^2dx

\vspace{0.4cm}

\\ \leq C \|f\|^2_{X^*} +\frac{1}{2}|\epsilon|\ \|u\|^2_{X} ,
\hspace{0.5cm} \tau \geq 0.
\end{array} \right.
\end{equation}

Estimate (\ref{basic}) follows  easily from  the  three above
estimates,  assuming  that  the  terms in the  left hand side are
finite (this will be  seen through the proofs of these estimates)
in  fact: for $\tau < 0$, (\ref{basic}) is a consequence of
(\ref{maricruz0}) and (\ref{tau<epsilon}) and for $\tau \geq 0$,
(\ref{basic}) is a consequence of (\ref{principal2repulsivo}) and
(\ref{tau<epsilon}).


\noindent \underline{Proof of (\ref{maricruz0})}.
The  basic  estimates  to  obtain (\ref{maricruz0}) are (\ref{alberto3})  and  (\ref{alberto1}) in  the  appendix.  The point  is  the  choice  of  an appropriate   radial function $\Phi_R (x) \equiv \Phi_R(|x|)$ in (\ref{alberto3}) so  that   its  left hand  side to be   an upper  bound of the  left hand  side  of  (\ref{maricruz0}).

We  start  by  performing some  calculations  of  derivatives  of
such a radial  function. We have:
\begin{equation}\label{anselmo3}
\nabla \bar{u}(x) \cdot D^2 \Phi_R(x) \cdot \nabla u(x)=
\Phi_R''(|x|)
|\partial_ru(x)|^2+\frac{\Phi_R'(|x|)}{|x|}|\partial_\tau u(x)|^2,
\end{equation}
where  $\partial_ru$    and $\partial_\tau u$ are respectively the radial and
tangential parts of the derivative of $u$.

We also have
\begin{equation}\label{granveg} - \nabla \Phi_R (x) \cdot \nabla V(x) =-\Phi_R'(|x|) \partial_rV(x).
\end{equation}
 Roughly  speaking, we  want the quadratic form determined by $D^2 \Phi_R(x)$ to be  positive   and the function
  $ \frac{1}{4} \Delta^2\Phi_R(x) +
\frac{1}{2}\nabla V(x) \cdot \nabla\Phi_R (x)$ to be nonpositive.

In  the  case of   $d>3$ we proceed to  find  $\Phi_R$. We will
use a combination of the classical Morawetz multiplier $|x|$, see
\cite{M}, and the multiplier $(1+|x|^2)^{1/2}$ ( adapted to  the
Hemholtz equation). More precisely,  we start by considering
$$\Phi(x)=(1+|x|^2)^{1/2}+|x|.$$
As  above  we identify $\Phi(x) \equiv \Phi(r)$ with $|x|=r$. Then  we have
\begin{equation}\label{maricruz1}
 \Delta^2 \Phi (x) =-\frac{(d-1)(d-3)}{(1+|x|^2)^{3/2}}
 -\frac{6(d-3)}{(1+|x|^2)^{5/2}}
-\frac{15}{(1+|x|^2)^{7/2}}-\frac{(d-1)(d-3)}{|x|^3},
\end{equation}
\begin{equation}\label{paula1}
-\Delta^2\Phi(x) \ge \frac{d(d+2)}{8\sqrt 2 }\chi_{B(0,1)}(x) +
\frac{(d-1)(d-3)}{|x|^3},
\end{equation}
\begin{equation}\label{maricruz2}
1 \leq \Phi'(r)=\frac{r}{(1+r^2)^{1/2}}+1 \leq 2  \hspace{1cm} r
\geq 0,
\end{equation}
\begin{equation}
\label{maricruz3} 0 \leq
r\Phi''(r)=\frac{r}{(1+r^2)^{1/2}}-\frac{r^3}{(1+r^2)^{3/2}} \leq
1 \hspace{1cm} r \geq 0,
\end{equation}
\begin{equation}
\label{maricruz4} \frac{1}{2\sqrt{2}} \leq \inf \left\{\Phi''(r),
\frac{\Phi'(r)-1}{r} \right\} \hspace{1cm} r \leq 1 .
\end{equation}

Now we define for $R>0$
$$\Phi_R(x)=R\Phi(\frac{x}{R}).$$
As a consequence of (\ref{crepulsivo}) and (\ref{paula1}):
\begin{equation}\label{villalobos2}
\begin{array}{ll}  \hspace{0.75cm} - \frac{1}{4} \Delta^2 \Phi_R (x) -\frac{1}{2} \nabla \Phi_R
(x) \cdot \nabla V (x)

\vspace{0.3cm}

\\
\geq \frac{\gamma}{2} \frac{V(x)}{|x|}+
\frac{d(d+2)}{32\sqrt{2}R^3}\chi_{B(0,R)}(x)+\frac{\eta}{4}
\frac{(d-1)(d-3)}{|x|^3}. \end{array}
\end{equation}

 By using
(\ref{paula1})-(\ref{maricruz4}) and (\ref{anselmo3}), we obtain, by inserting all
 the  estimates in (\ref{alberto3}),
that
$$ \frac {1}{ R}\int_{B(0,R)}|\nabla u(x)|^2\ dx
+\int_{\mathbb{R}^d}\frac {V(x)}{|x|}|u(x)|^2\ dx$$
\begin{equation}\label{control1}
+(d-3)\int_{\mathbb{R}^d}\frac{|u(x)|^2}{|x|^3}dx
+\frac{1}{R^3}\int_{B(0,R)} |u(x)|^2\ dx
\end{equation}
$$
\leq
 C\int_{\mathbb{R}^d} |f(x)||\nabla u(x)|\ dx
+C\int_{\mathbb{R}^d}|f(x)|\frac{|u(x)|}{|x|}\ dx
 +C| \epsilon | \int_{\mathbb{R}^d}|u(x)|\
|\nabla u(x)|\ dx.$$

 To  finish  we  need    the terms on the left hand side of
(\ref{control1}) to be finite but,  before seeing  this, let us
deal with    the case $d=3$. In this case, we can find, for $R>0$,
a radial function $\Phi$ (see Lemma 6.1 in Appendix 2) such that:

\begin{itemize}
\item $\Delta^2
\Phi(x)=-\frac{c_1}{R^3}\chi_{(0,R)}(x)-\frac{W(|x|)}{|x|}, \;\; x
\in \mathbb{R}^3$, where $W$ is the  function in
(\ref{crepulsivo1}). \item $\inf_{r>0} \{ \Phi'(r),\Phi''(r) \}
\geq 0$. \item $\inf_{r \in (0,R)} \left\{
\frac{\Phi'(r)}{r},\Phi''(r) \right\} \geq \frac{c_2}{R}$. \item
$c_3<\Phi'(r)< \kappa < \frac{1}{2}, \;\;\; r>0$.
\end{itemize}
for some $c_1$, $c_2$, $c_3$ and $\kappa$ positive constants.

If we use the above inequalities and (\ref{crepulsivo1}), we have
\begin{equation}\label{dionisio1}
\begin{array}{lll} -\frac{1}{4} \Delta^2 \Phi(x)-\frac{1}{2} \nabla
\Phi(x) \cdot V(x)=
\frac{c_1}{4R^3}\chi_{(0,R)}(x)+\frac{W(|x|)}{4|x|}-\frac{1}{2}\Phi'(x)
\partial_r  V(x)

\vspace{0.3cm}

\\ \hspace{1.5cm} \geq \frac{c_1}{4R^3}\chi_{(0,R)}(x) +
\frac{ \gamma V(x)}{2|x|}+ \frac{w(|x|)}{2|x|} \left(
\frac{1}{2}-\Phi'(|x|) \right)

\vspace{0.4cm}

\\ \hspace{2cm} \geq \frac{c_1}{4R^3}\chi_{(0,R)}(x) +  \frac{ \gamma
V(x)}{2|x|} + \frac{w(|x|)}{2|x|} \left( \frac{1}{2}-\kappa
\right) .\end{array}
\end{equation}

Now, since $\frac{1}{2}-\kappa >0$, we can  use (\ref{alberto3})
 as in the case $d>3$ to conclude   that for   $d \geq 3$ we have

$$ \frac {1}{ R}\int_{B(0,R)}|\nabla u(x)|^2\ dx
+\int_{\mathbb{R}^d}\frac {V(x)}{|x|}|u(x)|^2\ dx
+(d-3)\int_{\mathbb{R}^d}\frac{|u(x)|^2}{|x|^3}dx$$
\begin{equation}\label{control1b}
+ \chi(d) \int_{\mathbb{R}^3}\frac{W(|x|)}{|x|}|u(x)|^2dx
+\frac{1}{R^3}\int_{B(0,R)} |u(x)|^2\ dx
\end{equation}
$$
\leq
 C\int_{\mathbb{R}^d} |f(x)||\nabla u(x)|\ dx
+C\int_{\mathbb{R}^d}|f(x)|\frac{|u(x)|}{|x|}\ dx
 +C| \epsilon | \int_{\mathbb{R}^d}|u(x)|\
|\nabla u(x)|\ dx.$$

We check now that the terms on the left hand side of
(\ref{maricruz0}) are finite.  Notice  that, since the right hand
side of (\ref{control1b}) does not depend on $R>0$,  it suffices
to  check that it is finite.

 On one hand the classical theory guarantees, with our  a priori  condition $f\in L^2$,   the existence and uniqueness of
solution of (\ref{ecuacionrepulsiva}) in $L^2(\mathbb{R}^d).$
On the other hand, by  taking $\varphi =1$ in (\ref{alberto1}),
since $V\ge 0,$ we obtain
\begin{equation}
\label{alberto5} \int |\nabla u|^2 dx \leq (\max \{0,\tau \}+1)
\|u\|_2^2 + \|f\|_2^2,
\end{equation}
and hence  we obtain   that $u\in W^{1,2}.$
Therefore,
\begin{equation}
\label{julio0} \int_{\mathbb{R}^n} |f(x)||\nabla u(x)|\ dx \leq
\|f\|_{L^2}\|\nabla u\|_{L^2}<\infty,
\end{equation}
\begin{equation} \label{julio00} \int_{\mathbb{R}^n} |u(x)||\nabla
u(x)|\ dx \leq \|u\|_{L^2}\|\nabla u\|_{L^2}<\infty,
\end{equation}

and using  Hardy's inequality, we have
\begin{equation}
\label{julio1}
 \int_{\mathbb{R}^n}|f(x)|\frac{|u(x)|}{|x|}\ dx
  \leq C   \int_{\mathbb{R}^d}|f(x)|^2dx +
C\int_{\mathbb{R}^d}|\nabla u(x)|^2dx < \infty.
\end{equation}
 Finally, by taking sup  in
 (\ref{control1b}) we get
\begin{equation}\label{alejandra5}
\begin{array}{ll}
  \||\nabla
u \|_X^2 +\int_{\mathbb{R}^d}\frac {V(x)}{|x|}|u(x)|^2\ dx +
\chi(d) \int_{\mathbb{R}^3}\frac{W(|x|)}{|x|}|u(x)|^2dx

\vspace{0.4cm}

\\ + \sup_{R>0} \frac{1}{R^3}\int_{B(0,R)} |u(x)|^2\ dx +
(d-3)\int_{\mathbb{R}^d} \frac{|u(x)|^2}{|x|^3}\ dx < \infty
\;\;\;\;\; d \geq 3,
\end{array}
\end{equation}
 as  desired.
 Now, to obtain  the  a priori  estimate, we
bound   the terms on the right hand side of (\ref{control1b}).

We start by  writing
\begin{equation}\label{alejandra2}
\begin{array}{llll}  C \int_{\mathbb{R}^d}|f(x)|\frac{|u(x)|}{|x|}\ dx \leq
C\sum_{j \in \mathbb{Z}}\int_{C(j)}|f(x)|\frac{|u(x)|}{|x|}dx

\vspace{0.4cm}

\\
 \leq C\sum_{j \in \mathbb{Z}} \left(2^j \int_{C(j)}|f(x)|^2dx
\right)^{1/2}\left(\frac{1}{2^{3j}}\int_{C(j)}|u(x)|^2 dx
\right)^{1/2}

\vspace{0.4cm}

\\
 \leq C \|f\|_{X^*}\left( \sup_{R>0}\frac{1}{R^3}\int_{B(0,R)}
|u(x)|^2\ dx \right)^{1/2}

\vspace{0.4cm}

\\  \leq C\|f\|^2_{X^*} +\frac{1}{2}
\sup_{R>0}\frac{1}{R^3}\int_{B(0,R)} |u(x)|^2\ dx. \end{array}
\end{equation}
 We also have the pairing
\begin{equation}\label{alejandra2b}
C \int_{\mathbb{R}^d} |f(x)||\nabla u(x)|\ dx \leq C
\|f\|_{X^*}\|\nabla u\|_X \leq C\|f\|^2_{X^*} +\frac{1}{2}\|\nabla
u\|^2_X.
\end{equation}

To treat  the third term on the right hand side of
(\ref{control1b}), we take
  $\varphi =1$ in (\ref{alberto2})    to obtain
\begin{eqnarray}
\label{fi1imaginaria} |\epsilon| \int_{\mathbb{R}^d} |u(x)|^2 dx
&\leq& \int_{\mathbb{R}^d} |f(x)| |u(x)|dx.
\end{eqnarray}
and if we again take $\varphi =1$ in (\ref{alberto1}) and by using
(\ref{fi1imaginaria}), we   get
\begin{equation}\label{fi1real}
\begin{array}{ll}
|\epsilon | \int_{\mathbb{R}^d}|\nabla u(x)|^2\ dx \leq \max\{ 0,
\tau \} | \epsilon | \int_{\mathbb{R}^d} |u(x)|^2 dx +| \epsilon|
\int_{\mathbb{R}^d} |f(x)| |u(x)|\ dx

\vspace{0.4cm}

\\ \hspace{2cm}
\leq  (\max\{ 0, \tau \}+| \epsilon |)\int_{\mathbb{R}^d} |f(x)|
|u(x)|\ dx. \end{array}
\end{equation}

Now we use  Cauchy-Schwartz inequality, (\ref{fi1imaginaria}),
(\ref{fi1real})
 to have
\begin{equation}\label{julio2}
C |\epsilon| \int_{\mathbb{R}^d}|u(x)||\nabla u(x)|dx \leq
C\|f\|^2_{X^*}+ \frac{1}{4} ( |\epsilon| + \max\{0,\tau\})
\|u\|^2_X.
\end{equation}

 From this, (\ref{control1b}),  (\ref{alejandra2}) and by taking
sup in  $R>0$, we have
\begin{equation}\label{alejandra3}
\begin{array}{lll}
\| \nabla u\|^2_X  +\int_{\mathbb{R}^d}\frac {V(x)}{|x|}|u(x)|^2\
dx +  \sup_{R>0} \frac{1}{R^3}\int_{B(0,R)} |u(x)|^2\ dx

\vspace{0.4cm}

\\ +(d-3) \int_{\mathbb{R}^d}\frac{|u(x)|^2}{|x|^3}dx+
\chi(d) \int_{\mathbb{R}^3}\frac{W(|x|)}{|x|}|u(x)|^2dx \leq
C\|f\|_{X^*}

\vspace{0.4cm}

\\ +  \frac{1}{2} \sup_{R>0}\frac{1}{R^3}\int_{B(0,R)}
|u(x)|^2\ dx +\frac{1}{2}\|\nabla u\|^2_X+
 \frac{1}{4} ( |\epsilon| +\max\{0,\tau\}) \|u\|^2_X. \end{array}
\end{equation}
and we obtain (\ref{maricruz0}).

\noindent \underline{Proof of of estimate (\ref{tau<epsilon})}.

For $R>0$ fixed, we consider the function
$$
\varphi_R(x) = \frac{1}{R}\chi_{\{|x|<R\}}(x) +
\left(\frac{2}{R}-\frac{|x|}{R^2}\right)\chi_{\{R \leq |x| <
2R\}}.
$$
We have that
$$
\frac{1}{R}\chi_{\{|x|<R\}}(x) \le \varphi_R(x) \le
\frac{1}{R}\chi_{\{|x|<2R\}}(x),
$$
and
$$
|\nabla\varphi_R(x)|\le \frac{1}{R^2}\chi_{\{R<|x|<2R\}}(x).
$$

If we take in (\ref{alberto2}) $\varphi=\varphi_R,$ using the
previous estimates we obtain that
\begin{equation}\label{luis0}
\begin{array}{ll}
 \frac {|\epsilon|}{ R} \int_{B(0,R)}|u(x)|^2\ dx \leq
\frac 1 {R^2}\int_{R<|x|<2R}|\nabla u(x)||u(x)|\ dx

\vspace{0.4cm}

\\ \hspace{1.8cm}
+ \frac 1 R \int_{B(0,2R)}|f(x)||u(x)|\ dx. \end{array}
\end{equation}

Using  Cauchy-Schwartz inequality,
 we
have
\begin{equation}
\label{first} \frac 1 {R^2}\int _{R<|x|<2R}|\nabla u(x)||u(x)|dx
\le \frac{1}{4} \|\nabla u\|_X^2 + \frac{1}{4}
\sup_{R>0}\frac{1}{R^3} \int_{B(0,R)}|u(x)|^2dx.
\end{equation}

Finally, as in (\ref{alejandra2})
\begin{equation}\label{second}
\begin{array}{ll} \frac{1}{R}\int_{B(0,2R)}|f(x)||u(x)|dx \leq
2\int_{\mathbb{R}^d}|f(x)|\frac{ |u(x)|}{|x|}dx

\vspace{0.4cm}

\\ \hspace{0.5cm}
\leq \frac{1}{2}  \|f\|_{X^*}^2 + \frac{1}{2}
\sup_{R>0}\frac{1}{R^3} \int_{B(0,R)}|u(x)|^2dx. \end{array}
\end{equation}

The result follows from (\ref{luis0}), (\ref{first}) and
(\ref{second}). \hfill $\square$

\noindent \underline{Proof of of estimate
(\ref{principal2repulsivo})}.

 Following \cite{PV1}, for $R>0$
fixed, we consider two functions $\Phi_R$ and $\varphi_R$ given by
\begin{equation}
\label{PhiR} \nabla \Phi_R(x) =\frac{x}{R}\chi_{\{|x|<
R\}}(x)+\frac {x}{|x|}\chi_{\{|x|\ge R\}}(x),
\end{equation}
\begin{equation}
\label{varphiR} \varphi_R(x) =\frac{1}{2R}\chi_{\{|x|< R\}}(x).
\end{equation}
Some calculations give us the following identities which hold in
the distributional sense:
$$
\Delta\Phi_R(x) = \frac{d}{R}\chi_{\{|x|< R\}}(x) +
\frac{d-1}{|x|} \chi_{\{|x|> R\}}(x),
$$
$$
D^2_{ij}\Phi_R(x) = \frac{\delta_{ij}}{R}\chi_{\{|x|< R\}}(x) +
\left(\frac{\delta_{ij}}{|x|}-\frac{x_ix_j}{|x|^3}\right)\chi_{\{|x|>
R\}}(x),
$$
and
$$
\Delta( 2\varphi_R -\Delta \Phi_R)(x) = \frac {(d-1)}{R^2}
\delta_{\{|x|=R\}} + \frac{(d-1)(d-3)}{|x|^3} \chi_{\{|x|>
R\}}(x),
$$
where $$\delta_{ij}=\left\{
\begin{array}{ll} 1 \;\;\;\; \textrm{if} \;\;\; i=j \\ 0 \;\;\;\; \textrm{if} \;\;\; i \neq
j. \end{array} \right. $$

It is easy to check  that
$$
\nabla \bar{u}(x) \cdot D^2 \Phi_R(x)\cdot\nabla  u(x) \ge
\frac{|\nabla u(x)|^2}{R}\chi_{\{|x|<R\}}(x),
$$
$$
|\varphi_R|\le \frac{1}{2|x|}, \quad |\nabla\Phi_R|\le 1, \quad
|\Delta\Phi_R|\le \frac{d}{|x|}.
$$

From (\ref{alejandra5})  we know that
$$(d-3)\int_{\mathbb{R}^d}\frac{|u(x)|^2}{|x|^3}dx + \chi(d)
  \int_{\mathbb{R}^3}\frac{W(|x|)}{|x|}|u(x)|^2dx < \infty.$$
 Then using
(\ref{crepulsivo}) and (\ref{crepulsivo1}),
\begin{equation}\label{guillermo1}
\begin{array}{ll}
-\frac{1}{2}\int_{\mathbb{R}^d} \nabla V(x) \cdot \nabla \Phi_R(x)
|u(x)|^2dx  \geq \frac{\gamma}{2R}\int_{B(0,R)}V(x)|u(x)|^2dx

\vspace{0.4cm}

\\  +
\frac{\gamma}{2}\int_{|x|>R}\frac{V(x)}{|x|}|u(x)|^2dx -(1-
\eta)\frac{(d-1)(d-3)}{4}\int_{\mathbb{R}^d}\frac{|u(x)|^2}{|x|^3}dx
\;\;\;\; d >3
\end{array}
\end{equation}
and
\begin{equation}\label{guillermo2}
\begin{array}{ll}
-\frac{1}{2}\int_{\mathbb{R}^3} \nabla V(x) \cdot \nabla \Phi_R(x)
|u(x)|^2dx  \geq \frac{\gamma}{2R}\int_{B(0,R)}V(x)|u(x)|^2dx

\vspace{0.4cm}

\\  +
\frac{\gamma}{2}\int_{|x|>R}\frac{V(x)}{|x|}|u(x)|^2dx
-\frac{1}{2}\int_{\mathbb{R}^3}\frac{W(|x|)}{|x|}|u(x)|^2dx.
\end{array}
\end{equation}
If $d \geq 3$, with the above  inequalities, by using
$\Phi=\Phi_R$ and $\varphi=\varphi_R$ in (\ref{alberto4}) and that
$\tau \geq 0$, we have that
\begin{equation}\label{acotacionstep2}
\begin{array}{llll}
\frac 1{2R}\int_{B(0,R)}|\nabla u(x)|^2 dx +
\frac{(d-1)}{4R^2}\int_ {|x|=R}|u(x)|^2d\sigma_R(x) +
\frac{\tau}{2R}  \int_{B(0,R)}  |u(x)|^2 dx

\vspace{0.4cm}

\\
+ \frac{d-3}{4}\int_{|x|>R}\frac{|u(x)|^2}{|x|^3}\ dx + \frac
{\gamma}{2R}\int_{ B(0,R)}V|u(x)|^2 dx+ \frac
{\gamma}{2}\int_{|x|>R}\frac{V(x)}{|x|}|u(x)|^2\ dx

\vspace{0.4cm}

\\
-(1- \eta)\frac{d-3}{4}\int_{\mathbb{R}^d}\frac{|u(x)|^2}{|x|^3}dx
- \frac{1}{2} \chi(d)
\int_{\mathbb{R}^3}\frac{W(|x|)}{|x|}|u(x)|^2dx \leq
\int_{\mathbb{R}^d}  |f||\nabla u(x)|\ dx

\vspace{0.4cm}

\\
+ \frac{(d+1)}{2}\int_{\mathbb{R}^d}  |f(x)|\frac{|u(x)|}{|x|}\ dx
 + |\epsilon| \int_{\mathbb{R}^d} |\nabla
u(x)||u(x)|\ dx + \frac
{1}{2}\int_{\mathbb{R}^d}\frac{V(x)}{|x|}|u(x)|^2\ dx \end{array}
\end{equation}

If we use (\ref{alberto5})-(\ref{julio1}) and (\ref{alejandra5}),
we have
\begin{equation}\label{guillermo3}
\begin{array}{ll}
\|\nabla u\|^2_X+\int_ {\mathbb{R}^d} \frac{V(x)}{|x|}\
|u(x)|^2dx+\sup_{R>0}\frac{1}{R^2}\int_
{|x|=R}|u(x)|^2d\sigma_R(x)

\vspace{0.4cm}

\\ \hspace{3cm}  + \|V^{\frac{1}{2}}u\|^2_X +\tau \|u\|^2_X < \infty.
\end{array}
\end{equation}
Let $\delta >0$ be such that $$\int_0^\infty t W(t) dt <
\frac{1}{2}-\delta,$$ use
 (\ref{alejandra2}),
(\ref{alejandra2b}), (\ref{julio2}), (\ref{maricruz0}) and take
sup  in (\ref{acotacionstep2}), then  we have
\begin{equation}\label{julio3}
\begin{array}{lll}
 \|\nabla u \|^2_{X} + \sup_{R>0}  \frac{1}{R^2}
\int_ {|x|=R}|u(x)|^2d\sigma_R(x) + \tau \|u\|^2_X+
\|V^{\frac{1}{2}}u\|^2_X

\vspace{0.4cm}

 \\    +
\int_{\mathbb{R}^d}\frac{V(x)}{|x|}|u(x)|^2\ dx + (d-3)
\int_{\mathbb{R}^d} \frac{|u(x)|^2}{|x|^3}dx-\frac{1}{2} \chi(d)
\int_{\mathbb{R}^3}\frac{W(|x|)}{|x|}|u(x)|^2dx

\vspace{0.4cm}

\\
 \leq C \|f\|_{X^*}^2 +\frac{1}{2} \sup_{R>0} \frac{1}{R^3}
\int_{B(0,R)}|u(x)|^2dx  +\frac{1}{2} \|\nabla u \|_{X}^2 +
\frac{1}{2} (|\epsilon|+  \tau) \| u \|_{X}^2. \end{array}
\end{equation}
Estimate (\ref{principal2repulsivo}) follows from this and by
using
\begin{equation}\label{guillermo5}
\sup_{R>0} \frac{1}{R^3} \int_{B(0,R)} |u(x)|^2dx \leq \sup_{R>0}
\frac{1}{R^2} \int_{|x|=R} |u(x)|^2d \sigma_R(x),
\end{equation}
and
\begin{equation}\label{guillermo7}
\begin{array}{ll}
(\frac{1}{2}+\delta)\int_{\mathbb{R}^3}\frac{W(|x|)}{|x|}|u(x)|^2dx=
(\frac{1}{2}+\delta)\int_0^\infty
\frac{W(t)}{t}\int_{|x|=t}|u(x)|^2d\sigma_t(x)dt

\vspace{0.4cm}

\\ \hspace{1cm}  < (\frac{1}{4 }-\delta^2)
\sup_{R>0}\frac{1}{R^2}\int_{|x|=R}|u(x)|^2d\sigma_R(x).
\end{array}
\end{equation}

\subsection{Proof of the theorem \ref{teorema1}}
\label{evolucionrepulsivo}
We start with the proof of the estimate (\ref{juan10}).

For simplicity, for $R>0$ fixed, we introduce the function
$\psi_R$ defined by
\begin{equation}
\label{psiR} \psi_R(x)=\frac{1}{R(1+\frac{|x|^2}{R^2})}.
\end{equation}

Since $\psi_R(x)>\frac{1}{2R}\chi_{B(0,R)}(x),$ to show that
(\ref{juan10}) holds, it is enough to prove that for all $R>0,$
there exists a positive constant $C$ independent of $R,$ such that
\begin{equation*}
\int_{- \infty}^{\infty}\int_{\mathbb{R}^d}
\psi_R(x)|D^{\frac{1}{2}}e^{itH}u_0(x)|^2dxdt \leq C\ \|u_0\|_2^2.
\end{equation*}
This estimate is equivalent to say that the operator
$\psi_R^{\frac{1}{2}}D^{\frac{1}{2}}$ is $H-$smooth (see
\cite{rs}, v.IV or \cite{ky}).
 We will prove, further, that
$\psi_R^{\frac{1}{2}}D^{\frac{1}{2}}$ is $H-$supersmooth, which
means, (see \cite{ky}),  that for all $f\in
 \mathcal{D}(D^{\frac{1}{2}}\psi_R^{\frac{1}{2}}) \subset
L^2(\mathbb{R}^d),$ $\tau\in\mathbb{R}$ and $\epsilon> 0,$ there
exists a  positive constant $C$ independent of $\tau$ and
$\epsilon$ such that
$$
\|\psi_R^{\frac{1}{2}}D^{\frac{1}{2}}R_H(\tau\pm
i\epsilon)D^{\frac{1}{2}}\psi_R^{\frac{1}{2}}f\|_{L^2}\le C\
\|f\|_{L^2}.
$$
In our case, the constant $C$ also has to be
independent of $R.$

This estimate can be obtained, using complex interpolation of
operators, from the following estimate and its dual version:
$$
\|\psi_R^{\frac{1}{2}}D^{1+i\eta}R_H(\tau\pm
i\epsilon)D^{-i\eta}\psi_R^{\frac{1}{2}}f\|_{L^2}\le C\
\|f\|_{L^2}, \qquad\forall\ \eta\in\mathbb{R}.
$$

This inequality can be written as
\begin{equation}
\label{z1peso} \|D^{1+i\eta}R_H(\tau\pm
i\epsilon)D^{-i\eta}f\|_{L^2(\psi_R)}\le C\
\|f\|_{L^2(\psi_R^{-1})}.
\end{equation}

We   see that for any $\gamma\in \mathbb{R},$
\begin{eqnarray}
\label{b1} \|D^{i\gamma}f\|_{L^2(\psi_R)}&\le& C\
\|f\|_{L^2(\psi_R)},
\\
\label{b2} \|D^{-i\gamma}f\|_{L^2(\psi_R^{-1})}&\le& C\
\|f\|_{L^2(\psi_R^{-1})},
\\
\label{rj} \|Df\|_{L^2(\psi_R)}&\le& C\ \|\nabla
f\|_{L^2(\psi_R)},
\end{eqnarray}
with $C$ a positive constant independent of $R.$ Therefore, to
show that (\ref{z1peso}) holds, it will be  enough to prove that
\begin{equation}
\label{nabla} \|\nabla R_H(\tau\pm i\epsilon)f\|_{L^2(\psi_R)}\le
C\ \|f\|_{L^2(\psi_R^{-1})}.
\end{equation}

Estimates (\ref{b1}), (\ref{b2}) and (\ref{rj}) are consequence of
the fact that $\psi_R$ and $\psi_R^{-1}$ are weights in the class
$A_2$ (see \cite{D1}). This means that for any cube $Q$ in
$\mathbb{R}^d,$ there exists a constant $C$ independent of $Q$
such that
$$
\left(\frac{1}{|Q|}\int_Q\psi_R(x)dx\right)
\left(\frac{1}{|Q|}\int_Q\frac{1}{\psi_R(x)}dx\right) \le C.
$$
One can check that the constant $C$ above  is
independent of $R.$

Now, since $\psi_R (x)\le 1/R$ and $\psi_R (x)\le R/|x|^2,$ we
have that
\begin{eqnarray}
\nonumber \| \nabla R_H(\tau\pm i\epsilon)f\|^2_{L^2(\psi_R)}
&\le& \frac{1}{R} \int_{B(0,R)}| \nabla R_H(\tau \pm i \epsilon)
f(x) |^2 dx
\\
\label{izquierda} &&+ R\int_{|x| \geq R} \frac{| \nabla R_H(\tau
\pm i \epsilon) f(x)|^2}{|x|^2} dx.
\end{eqnarray}

We control the second term in the previous inequality by taking
$k\in\mathbb{N}$ such that
\begin{equation}\label{agosto}
2^k<R \leq 2^{k+1}.
\end{equation}
Thus
$$
R\int_{|x| \geq R} \frac{| \nabla R_H(\tau \pm i \epsilon)
f(x)|^2}{|x|^2} dx \leq R \sum_{j\ge k}^{ \infty}
\int_{C_j}\frac{| \nabla R_H(\tau \pm i \epsilon) f(x)|^2}{|x|^2}
dx
$$
$$\leq R\ \| \nabla R_H(\tau \pm i \epsilon) f\|^2_{X} \sum_{j\ge k}^{
\infty} \frac{1}{2^j} \leq C\ \| \nabla R_H(\tau \pm i \epsilon)
f\|^2_{X}.$$
Inserting this in (\ref{izquierda}) we have
that
\begin{equation}
\label{r} \| \nabla R_H(\tau\pm i\epsilon)f\|^2_{L^2(\psi_R)} \le
C\ \| \nabla R_H(\tau \pm i \epsilon) f\|^2_{X}.
\end{equation}

Furthermore, taking into account that $\psi^{-1}_R (x)\ge R,$
$\psi^{-1}_R (x)\ge |x|^2/R,$ and  for $k \in \mathbb{N}$ satifying
(\ref{agosto}), we have that
\begin{equation}\label{faltaba}
\| f\|^2_{X^{*}} \leq R \int_{B(0,R) } |f(x)|^2dx + \frac{1}{R}
\int_{|x| \ge R} |f(x)|^2 |x|^2dx \leq C \|
f\|^2_{L^2(\psi_R^{-1})}.
\end{equation}
After  (\ref{r}), (\ref{agosto1}) and  the above inequality  we get
(\ref{nabla}), from which   (\ref{juan10}) follows.

Arguing in a similar way to show that (\ref{juan14}) holds, we
will prove that the operator of multiplication by the
function $\psi_R^{\frac{1}{2}} \ V^{\frac{1}{4}}$ is
$H$-supersmooth. We   prove then  that for all $f\in
\mathcal{D}(\psi_R^{\frac{1}{2}} \ V^{\frac{1}{4}}) \subset
L^2(\mathbb{R}^d),$ $\tau\in\mathbb{R}$ and $\epsilon> 0,$ there
exists a positive constant $C$ independent of $\tau$,
$\epsilon$ and $R$ such that
$$
\|\psi_R^{\frac{1}{2}} \ V^{\frac{1}{4}} R_H(\tau\pm i\epsilon)
\psi_R^{\frac{1}{2}} \ V^{\frac{1}{4}} f\|_{L^2}\le C\
\|f\|_{L^2}.
$$

Again this estimate can be obtained, using complex interpolation of
operators, from the following estimate and its dual:
$$
\|\psi_R^{\frac{1}{2}}V^{\frac{1+i\gamma}{2}}R_H(\tau\pm
i\epsilon)V^{-i\frac{\gamma}{2}}\psi_R^{\frac{1}{2}}f\|_{L^2} \le
C\ \|f\|_{L^2}, \qquad\forall\ \gamma\in\mathbb{R}.
$$

This inequality can be written as
\begin{equation}
\label{z1pesob} \|V^{\frac{1+i\gamma}{2}}R_H(\tau\pm
i\epsilon)V^{-i\frac{\gamma}{2}}f\|_{L^2(\psi_R)}\le C\
\|f\|_{L^2(\psi_R^{-1})}.
\end{equation}

Since
\begin{eqnarray*}
\|V^{i\gamma}f\|_{L^2(\psi_R)}&\le& C\|f\|_{L^2(\psi_R)},
\\
\|V^{-i\gamma}f\|_{L^2(\psi_R^{-1})}&\le&
C\|f\|_{L^2(\psi_R^{-1})},
\end{eqnarray*}
with $C$ a positive constant independent of $R,$   to
show  (\ref{z1pesob})  we can reduce  to prove that
$$
\|V^{\frac{1}{2}} R_H(\tau\pm i\epsilon)f\|_{L^2(\psi_R)}\le C\
\|f\|_{L^2(\psi_R^{-1})}.
$$

Finally, since $\psi_R (x)\le 1/|x|,$ from (\ref{agosto2}) and
(\ref{faltaba}),we have that
\begin{equation*}
\|V^{\frac{1}{2}} R_H(\tau\pm i\epsilon)f\|_{L^2(\psi_R)} \leq C\
\|R_H(\tau\pm i\epsilon)f\|_{L^2\left(\frac{V(x)}{|x|}\right)}
\end{equation*}
$$ \le C\ \|f\|_{X^*}
\le  C\ \|f\|_{L^2(\psi_R^{-1})}.$$

\hfill $\square$

\begin{remark}\label{ultimoremark}
Estimates (\ref{maricruz0}) and (\ref{tau<epsilon}) hold  for $\gamma=0$,   nevertheless our proof  of  estimate (\ref{principal2repulsivo}) does not  work in  this  case. To  obtain  (\ref{basic})  we need also (\ref{principal2repulsivo}) .  This  is the reason  for  which  theorem \ref{teorema1} is  not  a  generalization  of  the  work \cite{BRV}.
\end{remark}
\section{Some attractive perturbations}
\subsection{Estimates for the resolvent}
In this section, we give a priori estimate for the resolvent of
the operator $H$ defined in the Theorem \ref{teorema2}, that
will be  the  key  to prove  this theorem.
\begin{theorem}
\label{tatractivo} Let $V$ and $n$ be two real valued functions
satisfying the conditions of Theorem \ref{teorema2}, and let u
be  a  solution  of  the  equation
\begin{equation}
\label{ecuacionatractiva} -\Delta u+(V(x)+n(x))\ u\pm i\epsilon u
- \tau u=f, \quad \epsilon\neq 0,\ x\in\mathbb{R}^d \;\;\;\; d\ge
3.
\end{equation}
Then, given $\rho>0$ and $\tau_0>0,$    the following
  estimates  hold:
\begin{equation}\label{basicatractivopv}
\begin{array}{lll}
\hspace{1.5cm}  \|\nabla u\|^2_{X_{\rho}} +
 \sup_{R\ge\rho}\frac{1}{R^2}\int_{|x|=R}|u(x)|^2d\sigma_R(x)

\vspace{0.4cm}

\\ +  \|\ |n|^{\frac{1}{2}}u\|^2_{X_{\rho}} +
(d-3)\int_{\mathbb{R}^d}\frac{|u(x)|^2}{|x|^3}\ dx + \ \tau\
\|u\|^2_{X_{\rho}} + \int_{\mathbb{R}^d}\frac{V(x)}{|x|}|u(x)|^2\
dx

\vspace{0.4cm}

\\ + \chi(d)\int_{\mathbb{R}^3}\frac{W(|x|)}{|x|}|u(x)|^2dx  \leq C(\beta_{\rho})
\left(\|f\|_{X_{\rho}^*}^2+\left\|\frac{f}{|n|^{\frac{1}{2}}}\right\|_{X_{\rho}^*}^2\right),
\hspace{1cm} \forall\ \tau\ge 0 . \end{array}
\end{equation}

\begin{equation}\label{basicatractivo}
\begin{array}{lll}
\hspace{1.5cm} \|\nabla u\|^2_{X_{\rho}} +
\sup_{R\ge\rho}\frac{1}{R^2}\int_{|x|=R}|u(x)|^2d\sigma_R(x)

\vspace{0.4cm}

\\ + \ \|\ |n|^{\frac{1}{2}}u\|^2_{X_{\rho}} +
(d-3)\int_{\mathbb{R}^d}\frac{|u(x)|^2}{|x|^3}\ dx + \ \tau\
\|u\|^2_{X_{\rho}} + \int_{\mathbb{R}^d}\frac{V(x)}{|x|}|u(x)|^2\
dx

\vspace{0.4cm}

\\ \hspace{0.5cm} + \chi(d)\int_{\mathbb{R}^3}\frac{W(|x|)}{|x|}|u(x)|^2dx
 \leq C(\beta_{\rho},\tau_0)\ \|f\|_{X_{\rho}^*}^2, \hspace{2cm}
\forall\ \tau\ge \tau_0. \end{array}
\end{equation}
 where $\chi(d)$ is the function defined in (\ref{tonto7}), $W$ the function
 in (\ref{crepulsivo1}), $C(\beta_{\rho})$ and
$C(\beta_{\rho},\tau_0)$ are constants independent of $\epsilon$
and $\tau$.
 \end{theorem}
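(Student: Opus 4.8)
The plan is to follow the proof of Theorem \ref{trepulsivo} line by line, treating $V$ by repulsivity exactly as there and the attractive perturbation $n$ as a controlled error. By density we may assume the a priori regularity $f\in L^2(\mathbb{R}^d)$. Condition iii), that is, $n_1\in L^\infty$ together with $\int_{\mathbb{R}^d}|n_2||g|^2\le c_1\int_{\mathbb{R}^d}|\nabla g|^2$ with $c_1<1$, makes the quadratic form of $-\Delta+V+n$ bounded below, so there is a unique $u\in L^2(\mathbb{R}^d)$ solving (\ref{ecuacionatractiva}); taking $\varphi=1$ in the energy identity (\ref{alberto1}) and using iii) once more to estimate $\int|n||u|^2\le\|n_1\|_\infty\|u\|_2^2+c_1\int|\nabla u|^2$ and absorb the last term (since $c_1<1$) gives $\int|\nabla u|^2<\infty$, hence $u\in W^{1,2}$. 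As in the repulsive case, this is what guarantees a posteriori that all the quantities appearing below are finite; one checks this along the way exactly as was done for (\ref{maricruz0}).

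The core of the argument is the Morawetz--Perthame--Vega multiplier identity (\ref{alberto4}) applied with the pair $(\Phi_R,\varphi_R)$ of (\ref{PhiR})--(\ref{varphiR}) and total potential $V+n$, run in parallel with the $X_\rho$-analogues of the intermediate estimates (\ref{maricruz0}), (\ref{tau<epsilon}) and (\ref{principal2repulsivo}). The part of $-\tfrac12\Phi_R'(|x|)\,\partial_r(V+n)(x)\,|u(x)|^2$ coming from $V$ is handled word for word as in Theorem \ref{trepulsivo} via the repulsivity conditions (\ref{crepulsivo}), (\ref{crepulsivo1}), (\ref{crepulsivo2}), producing in particular the positive terms $\tfrac{\gamma}{2R}\int_{B(0,R)}V|u|^2$ and $\tfrac{\gamma}{2}\int_{|x|>R}\tfrac{V}{|x|}|u|^2$. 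The genuinely new term is $-\tfrac12\int\Phi_R'(|x|)\,\partial_r n(x)\,|u|^2dx=-\tfrac12\int\Phi_R'(|x|)\,\tfrac{\nabla n(x)\cdot x}{|x||n(x)|}\,|n(x)|\,|u(x)|^2dx$; bounding $0\le\Phi_R'\le1$, splitting over $B(0,\rho)$ and the shells $C_j$, $j\ge j_0$, and using $\int_{C_j}|n||u|^2\le 2^{j+1}\|\,|n|^{1/2}u\|_{X_\rho}^2$ and $\int_{B(0,\rho)}|n||u|^2\le\rho\,\|\,|n|^{1/2}u\|_{X_\rho}^2$, this term is $\ge-\tfrac12\beta_\rho\,\|\,|n|^{1/2}u\|_{X_\rho}^2$ with $\beta_\rho$ as in (\ref{atractivo}); since $\beta_\rho<\tfrac14$ (and for $d=3$ the budget $\beta_\rho+\int_0^\infty tW<\tfrac12$ is shared with the $W$-term exactly as in the repulsive case), it is absorbed into a left-hand side term $\|\,|n|^{1/2}u\|_{X_\rho}^2$.

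That positive term is itself produced and controlled by the $\varphi_R$-localized version of (\ref{alberto1}): taking real parts, $\int\varphi_R|n||u|^2=-\int\varphi_R|\nabla u|^2-\tfrac12\int(\Delta\varphi_R)|u|^2-\int\varphi_R V|u|^2+\tau\int\varphi_R|u|^2+\mathrm{Re}\int\varphi_R f\bar u$, so for $\tau\ge0$, after controlling the boundary term $\tfrac12\int(\Delta\varphi_R)|u|^2$ by $\sup_{R\ge\rho}R^{-2}\int_{|x|=R}|u|^2$ (plus the $(d-3)\int|u|^2/|x|^3$ term when $d>3$) and bounding $\int\varphi_R|f||u|\le\tfrac14\int\varphi_R|n||u|^2+C\int\varphi_R|f|^2/|n|$, taking the supremum over $R\ge\rho$ yields $\|\,|n|^{1/2}u\|_{X_\rho}^2\lesssim\|\nabla u\|_{X_\rho}^2+\sup_{R\ge\rho}R^{-2}\int_{|x|=R}|u|^2+\|V^{1/2}u\|_{X_\rho}^2+\tau\|u\|_{X_\rho}^2+\|f/|n|^{1/2}\|_{X_\rho^*}^2$. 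Feeding this estimate, the $\epsilon$-estimate (the $X_\rho$-analogue of (\ref{tau<epsilon}), proved verbatim) and the repulsive bookkeeping back into (\ref{alberto4}) and taking suprema over $R\ge\rho$ gives (\ref{basicatractivopv}). Finally, (\ref{basicatractivo}) follows by redoing, for $\tau\ge\tau_0>0$, only the splitting of $\int\varphi_R|f||u|$: bound it instead by $\delta\tau\int\varphi_R|u|^2+\tfrac1{4\delta\tau}\int\varphi_R|f|^2\le\delta\tau\int\varphi_R|u|^2+\tfrac{C}{\delta\tau_0}\|f\|_{X_\rho^*}^2$, so the weight $|n|^{-1/2}$ on $f$ disappears at the price of a constant depending on $\tau_0$, the leftover $\delta\tau\int\varphi_R|u|^2$ being absorbed into the $\tau\|u\|_{X_\rho}^2$ already present on the left.

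The main obstacle is the simultaneous bookkeeping: one must keep all of $\|\nabla u\|_{X_\rho}^2$, the surface term $\sup_{R\ge\rho}R^{-2}\int_{|x|=R}|u|^2$, $\|\,|n|^{1/2}u\|_{X_\rho}^2$, $\tau\|u\|_{X_\rho}^2$ and $|\epsilon|\,\|u\|_{X_\rho}^2$ simultaneously finite (with only the $\rho$-truncated multipliers available) and then play them off against one another, the attractive contribution being absorbable only because $\beta_\rho$ is small; getting the constants to close and correctly apportioning, for $d=3$, the smallness budget between $\beta_\rho$ and $\int_0^\infty tW$ is where the care lies.
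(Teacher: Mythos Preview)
Your overall architecture matches the paper's: run the identity (\ref{alberto4}) with the pair $(\Phi_R,\varphi_R)$, treat $V$ via repulsivity exactly as in Theorem \ref{trepulsivo}, bound the attractive error $\tfrac12\int\Phi_R'(\partial_r n)|u|^2$ by $\beta_\rho\||n|^{1/2}u\|_{X_\rho}^2$ via the dyadic decomposition (this is the paper's (\ref{malo1})), and absorb it into the term $\tfrac{1}{2R}\int_{B(0,R)}|n||u|^2$ that (\ref{alberto4}) itself produces on the left (from $-\int\varphi_R n|u|^2$, $n<0$). The $X_\rho$-analogue of (\ref{tau<epsilon}) is indeed unchanged.

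The gap is in the $\epsilon$-term. You say the remaining bookkeeping is ``repulsive'', but the bound (\ref{julio2}) on $|\epsilon|\int|u||\nabla u|$ used $V\ge0$ in the energy identity to obtain $|\epsilon|\int|\nabla u|^2\le(\tau+|\epsilon|)\int|f||u|$. With $V+n$ in place of $V$ one gets instead, after absorbing $\int|n_2||u|^2\le c_1\int|\nabla u|^2$, the bound $|\epsilon|\int|\nabla u|^2\le C(\|n_1\|_\infty+\tau+|\epsilon|)\int|f||u|$, hence $|\epsilon|\int|u||\nabla u|\le C(\|n_1\|_\infty+\tau+|\epsilon|)^{1/2}\int|f||u|$. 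The pieces $(\tau+|\epsilon|)^{1/2}\int|f||u|$ close as before, but $\|n_1\|_\infty^{1/2}\int|f||u|$ does not: for $\tau\ge0$ there is no undecorated $\|u\|_{X_\rho}^2$ on the left to absorb it, and it is precisely here that the paper writes $\int|f||u|\le\|f/|n|^{1/2}\|_{X_\rho^*}\||n|^{1/2}u\|_{X_\rho}$ and pays the $\|f/|n|^{1/2}\|_{X_\rho^*}^2$ price (this is (\ref{controlterminoeatractivo})). You locate the weighted norm of $f$ in the $\int\varphi_R f\bar u$ term of (\ref{alberto1}); but that term is harmless---it is bounded in the paper with $\|f\|_{X_\rho^*}^2$ alone via (\ref{malo2})---so your mechanism does not cover the actual obstruction. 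Likewise, the passage from (\ref{basicatractivopv}) to (\ref{basicatractivo}) for $\tau\ge\tau_0$ is made in the paper by modifying \emph{this same} $\epsilon$-estimate (using $\|n_1\|_\infty\le(\|n_1\|_\infty/\tau_0)\tau$ so that the offending piece is swallowed by $\tau\|u\|_{X_\rho}^2$), not by resplitting $\int\varphi_R f\bar u$.
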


\begin{remark}
Estimate    (\ref{basicatractivo}) says that
\begin{equation}\label{playa1}
\| \nabla R_H(\tau \pm i \epsilon) f\|_{X_\rho} \leq C
\|f\|_{X_{\rho}^*},
\end{equation}
with $C$ independent of $\epsilon \neq 0$ and $\tau \geq 0$ and
\begin{equation}\label{playa2}
\tau^{1/2} \|R_H(\tau\pm i\epsilon)f\|_{X_\rho} \leq  C\
\|f\|_{X_{\rho}^*},
\end{equation}
with $C$ independent of $\epsilon \neq 0$ and $\tau \geq \tau_0$.
\end{remark}

\proof

We follow the scheme of  the proof of Theorem
\ref{trepulsivo}.  Let  $u$   denote a solution of
(\ref{ecuacionatractiva}) when $f\in L^2(\mathbb{R}^d).$

We start with the proof of (\ref{basicatractivopv}). This  will be
a consequence of
\begin{equation}
\label{principalatractivo2}
\left\{ \begin{array}{lll}  \|\nabla u\|^2_{X_{\rho}} +
 \sup_{R\ge\rho}\frac{1}{R^2}\int_{|x|=R}|u(x)|^2d\sigma_R(x)
+ \int_{\mathbb{R}^d}\frac{V(x)}{|x|}|u(x)|^2\ dx \vspace{0.4cm}
\\
 + \|\,|n|^{\frac{1}{2}}u\|^2_{X_{\rho}}
 +(d-3)\int_{\mathbb{R}^d}\frac{|u(x)|^2}{|x|^3}\ dx +
\chi(d)\int_{\mathbb{R}^3}\frac{W(|x|)}{|x|}|u(x)|^2dx
\vspace{0.4cm}
\\
+ \tau\ \|u\|^2_{X_{\rho}} \leq
C\left(\|f\|_{X_{\rho}^*}^2+\left\|\frac{f}{|n|^{\frac{1}{2}}}\right\|_{X_{\rho}^*}^2\right)
+ \delta |\epsilon|\|u\|^2_{X_{\rho}}.
\end{array} \right.
\end{equation}
and (\ref{tau<epsilon}), that is true for the solutions of
(\ref{ecuacionatractiva}).

We begin considering the case $d=3$. By (\ref{crepulsivo1}) and
Lemma \ref{lema6} in Appendix 2, we can choose three positive
constants $\alpha$, $\epsilon$ and $\delta$ an a radial function
$\Phi_R$ such that
\begin{itemize}
\item $ \alpha + \frac{\epsilon}{6}+ \int_0^\infty t \left( W(t)+
\frac{\delta}{t}\chi_{(0,\rho)}(t) \right)dt < \frac{1}{2}$. \item
 $\Delta \Phi_R(x)=
-\frac{\epsilon}{R^3}\chi_{(0,R)}(|x|)-\frac{W(|x|)}{|x|}
-\frac{\delta}{|x|^2}\chi_{(0,\rho)}(|x|)$. \item $
\inf_{r>0}\{\Phi'(r),\Phi''(r) \} \geq 0$. \item $ \inf_{r \in
(0,R)}\left\{\frac{\Phi'(r)}{r},\Phi''(r) \right\} \geq \frac{C
\epsilon}{R}$. \item $ \alpha < \Phi'(r) < \kappa < \frac{1}{2},
\;\;\;\; r>0$.
\end{itemize}
 Inserting this function in
(\ref{alberto3}) and following the proof of (\ref{maricruz0}),we
can check that
$$\frac {C_1}{ R}\int_{B(0,R)}|\nabla u(x)|^2\ dx
+\frac{C_1}{R^3}\int_{B(0,R)} |u(x)|^2\ dx
 +
C_1\int_{B(0,\rho)}\frac{|u(x)|^2}{|x|^2 }dx $$
$$+C_1
\int_{\mathbb{R}^3} \frac{W(|x|)}{|x|}|u(x)|^2dx+\alpha
\frac{\gamma}{2}\int_{\mathbb{R}^3}\frac {V(x)}{|x|}|u(x)|^2\ dx
 \leq
 C\int_{\mathbb{R}^3} |f(x)||\nabla u(x)|\ dx
$$
$$+C\int_{\mathbb{R}^3}|f(x)|\frac{|u(x)|}{|x|}\ dx +|\epsilon|
\int_{\mathbb{R}^3}|u(x)|\ |\nabla u(x)|\ dx
+\frac{1}{4}\int_{\mathbb{R}^3}\frac {\nabla n(x)\cdot x}{|x|}\
|u(x)|^2dx,
$$ with $C$ and $C_1$ absolute constants.

 In  a similar  way we
 have also  the  analogous for $d > 3$, see specially
 (\ref{maricruz2}),
 \begin{equation*}
\begin{array}{lll}
  \frac {C_1}{ R}\int_{B(0,R)}|\nabla u(x)|^2\ dx
+\frac{C_1}{R^3}\int_{B(0,R)} |u(x)|^2\ dx+
 C_1(d-3)\int_{\mathbb{R}^d}\frac{|u(x)|^2}{|x|^3}dx

\vspace{0.4cm}

\\ +\frac{\gamma}{2}\int_{\mathbb{R}^d}\frac {V(x)}{|x|}|u(x)|^2\ dx
 \leq
 C\int_{\mathbb{R}^d} |f(x)||\nabla u(x)|\ dx
+C\int_{\mathbb{R}^d}|f(x)|\frac{|u(x)|}{|x|}\ dx

\vspace{0.4cm}

\\
\hspace{1.2cm} +|\epsilon| \int_{\mathbb{R}^d}|u(x)|\ |\nabla
u(x)|\ dx +\int_{\mathbb{R}^d}\frac {\nabla n(x)\cdot x}{|x|}\
|u(x)|^2dx.
\end{array}
\end{equation*}
  Both  together  write  as
\begin{equation}\label{controlatractivo1}
\begin{array}{llll}
  \frac {C_1}{ R}\int_{B(0,R)}|\nabla u(x)|^2\ dx
+\frac{C_1}{R^3}\int_{B(0,R)} |u(x)|^2\ dx+
 C_1(d-3)\int_{\mathbb{R}^d}\frac{|u(x)|^2}{|x|^3}dx

\vspace{0.4cm}

\\
 +\left(\alpha \chi(d)+(1-\chi(d))\right)
   \frac{\gamma}{2}\int_{\mathbb{R}^d}\frac {V(x)}{|x|}|u(x)|^2\ dx
  +C_1 \chi(d) \int_{B(0,\rho)}\frac{|u(x)|^2}{|x|^2
}dx

\vspace{0.4cm}

\\ + C_1\chi \int_{\mathbb{R}^3} \frac{W(|x|)}{|x|}|u(x)|^2dx
 \leq
 C\int_{\mathbb{R}^d} |f(x)||\nabla u(x)|\ dx
+C\int_{\mathbb{R}^d}|f(x)|\frac{|u(x)|}{|x|}\ dx

\vspace{0.4cm}

\\
 +|\epsilon| \int_{\mathbb{R}^d}|u(x)|\ |\nabla
u(x)|\ dx + \left(\frac{1}{4}\chi(d)+(1-\chi(d))\right)
\int_{\mathbb{R}^d}\frac {\nabla n(x)\cdot x}{|x|}\ |u(x)|^2dx.
\end{array}
\end{equation}
This inequality allows us to start proving: for any $\delta>0$
there exist two positive constants $A \equiv A(\delta)$ and $B
\equiv B(\delta, \beta)$ such that
\begin{equation}\label{principalatractivo1}
\left\{ \begin{array}{llll}  A \|\nabla u\|_{X_{\rho}}^2 +A
\sup_{R\ge\rho}\frac{1}{R^3}\int_{B(0,R)} |u(x)|^2\ dx+ A\chi(d)
\int_{B(0,\rho)}\frac{|u(x)|^2}{|x|^2}dx

\vspace{0.4cm}

  \\  + A(d-3) \int_{\mathbb{R}^d}
\frac{|u(x)|^2}{|x|^3}dx + \left(\alpha \chi(d)+(1-\chi(d))\right)
\frac{\gamma}{2} \int_{\mathbb{R}^d} \frac{V(x)}{|x|}|u(x)|^2\ dx

\vspace{0.4cm}

 \\ + A\chi(d) \int_{\mathbb{R}^3}\frac{W(|x|)}{|x|}|u(x)|^2dx \le  B
\left(\|f\|_{X_{\rho}^*}^2+\left\|\frac{f}{|n|^{\frac{1}{2}}}\right\|_{X_{\rho}^*}^2\right)

\vspace{0.4cm}

\\+ \delta(|\epsilon| +
\tau)\|u\|^2_{X_{\rho}}  +\left(\frac{1}{4}
\chi(d)+(1-\chi(d))\right) (\delta+\beta_{\rho}) \|\
|n|^{\frac{1}{2}}u\|^2_{X_{\rho}}  ,
\end{array} \right.
\end{equation}
that  we will use in the proof of (\ref{principalatractivo2}).

\underline{Proof of estimate  (\ref{principalatractivo1})}

 We need to show  that the right hand side of (\ref{controlatractivo1})
is finite in order to take supremun in $R> \rho$.

Take $\varphi=1$ in (\ref{alberto1}), from (\ref{sobolev})
and the fact that $V \geq 0$, we get
\begin{equation}\label{tonto1}
\begin{array}{ll}
 \int_{\mathbb{R}^d} | \nabla u (x)|^2 dx \leq
\int_{\mathbb{R}^d} |n(x)||  u (x)|^2 dx+ \tau \int_{\mathbb{R}^d}
|  u (x)|^2 dx+ \int_{\mathbb{R}^d} |  u ||f| dx

\vspace{0.4cm}

\\ \hspace{0.6cm} \leq ( \|n_1\|_\infty + \tau +1)\|u\|^2_{L^2(\mathbb{R}^d)}
+\|f\|^2_{L^2(\mathbb{R}^d)}+c_1\int_{\mathbb{R}^d} | \nabla u
(x)|^2 dx, \end{array}
\end{equation}
with $0<c_1<1$, the constant in (\ref{sobolev}). From this, it follows
that if $f \in L^2(\mathbb{R}^d)$, then $u$, the solution of
(\ref{ecuacionatractiva}), is in $W^{1,2}(\mathbb{R}^d)$.

From (\ref{tonto1}) and (\ref{sobolev}), it is easy to check that
\begin{equation}\label{julio5}
\| |n|^{\frac{1}{2}}u \|^2_{X_\rho} \leq
\frac{C(\|n_1\|_{\infty},c_1)}{\rho}\left( \|u
\|^2_{L^2(\mathbb{R}^d)} + \|\nabla u
\|^2_{L^2(\mathbb{R}^d)}\right) < \infty.
\end{equation}

On the other hand, we have (\ref{julio0})-(\ref{julio1}) and using
(\ref{julio5}) and (\ref{atractivo}), we have
\begin{equation}\label{malo1}
\begin{array}{llll}
 \int_{\mathbb{R}^d}\frac{\nabla n(x)\cdot x}{|x|}\
|u(x)|^2 dx \leq \sum_{j\ge j_0}\int_{C_j}\frac{\nabla n(x)\cdot
x}{|x|\ |n(x)|}\ |n(x)|\ |u(x)|^2 dx

\vspace{0.4cm}

\\ \hspace{1.7cm} + \int_{B(0,\rho)}\frac{\nabla n(x)\cdot x}{|x|\ |n(x)|}\
|n(x)|\ |u(x)|^2dx

\vspace{0.4cm}

\\ \hspace{1cm} \leq \sum_{j\ge j_0} \sup_{x\in C_j}\frac{\nabla n(x)\cdot x}{|x|\
|n(x)|} \int_{C_j}|n(x)|\ |u(x)|^2dx

\vspace{0.4cm}

\\ + \sup_{x\in B(0,\rho)}\frac{\nabla n(x)\cdot x}{|x|\
|n(x)|} \int_{B(0,\rho)}|n(x)|\ |u(x)|^2dx \leq \beta_{\rho}\
\||n|^{\frac{1}{2}}u\|_{X_{\rho}}^2 < \infty. \end{array}
\end{equation}

We can take the sup  when $R \geq \rho$ in
(\ref{controlatractivo1}) to get
\begin{equation}\label{julio6}
\begin{array}{ll} \hspace{1.2cm}
\|\nabla u\|^2_{X_\rho} + \sup_{R \geq \rho}\frac{1}{R^3}
\int_{B(0,R)}|u(x)|^2dx +\int_{\mathbb{R}^d}\frac{V(x)}{|x|}dx

\vspace{0.4cm}

\\  +(d-3)\int_{\mathbb{R}^d}\frac{|u(x)|^2}{|x|^3}dx
     + \chi(d) \int_{B(0,\rho)}\frac{|u(x)|^2}{|x|^2 }dx +\chi \int_{\mathbb{R}^3}
\frac{W(|x|)}{|x|}|u(x)|^2dx < \infty, \end{array}
\end{equation}
and therefore the left hand  side of (\ref{principalatractivo1})
is finite.

Now let us see the a priori estimate (\ref{principalatractivo1}).
We study the terms on the right hand side of
(\ref{controlatractivo1}).
\begin{equation}
\label{firsttermatractivo} C \int_{\mathbb{R}^d} |f(x)||\nabla
u(x)|\ dx \le C \|f\|^2_{X_{\rho}^*}+ \frac{C_1}{2}  \|\nabla
u\|^2_{X_{\rho}}.
\end{equation}

Let $j_0$ be such that $2^{j_0}<\rho\le 2^{j_0+1}.$ Arguing as in
(\ref{alejandra2}), we have that
\begin{equation*}
C \int_{\mathbb{R}^d}|f(x)|\frac{|u(x)|}{|x|} dx \leq C
\int_{B(0,\rho)}|f(x)|\frac{|u(x)|}{|x|}dx +C \int_{|x|
>\rho}|f(x)|\frac{|u(x)|}{|x|}dx
\end{equation*}
$$ \leq C \int_{B(0,R)}|f(x)|\frac{|u(x)|}{|x|} dx + C \|f\|^2_{X^*_\rho}
+ \frac{C_1}{2} \sup_{R \geq \rho}\frac{1}{R^3}
\int_{B(0,\rho)}|u(x)|^2dx,$$ and
$$C \int_{B(0,\rho)}|f(x)|\frac{|u(x)|}{|x|}dx \leq C
\|f\|^2_{X^*_\rho}+ \frac{C_1(d-3)}{2}
\int_{\mathbb{R}^d}\frac{|u(x)|^2}{|x|^3}dx \hspace{1cm} \text{
for } d>3,$$
$$C\int_{B(0,\rho)}|f(x)|\frac{|u(x)|}{|x|}dx  \leq C \|f\|^2_{X^*_\rho}+ \frac{C_1}{2}
\int_{B(0,\rho)}\frac{|u(x)|^2}{|x|^2}dx \hspace{1cm} \text{ for } d=3.$$

Therefore, for $d \geq 3$ we have
\begin{equation}\label{malo2}
\begin{array}{ll}
 C \int_{\mathbb{R}^d}|f(x)|\frac{|u(x)|}{|x|} dx \leq
C \|f\|^2_{X^*_\rho}+\frac{C_1}{2} \chi(d)
\int_{B(0,\rho)}\frac{|u(x)|^2}{|x|^2}dx

\vspace{0.4cm}

\\  + \frac{C_1(d-3)}{2}
\int_{\mathbb{R}^d}\frac{|u(x)|^2}{|x|^3}dx + \frac{C_1}{2}
\sup_{R \geq \rho}\frac{1}{R^3} \int_{B(0,R)}|u(x)|^2dx.
\end{array}
\end{equation}
To control the third term on the right hand side of
(\ref{controlatractivo1}), use the first inequality of
(\ref{tonto1}) to obtain
$$\int_{\mathbb{R}^d}|\nabla u(x)|^2 dx \le (\tau
+\|n_1\|_{L^{\infty}})\int_{\mathbb{R}^d} |u(x)|^2\ dx
+\int_{\mathbb{R}^d} |f(x)| |u(x)| dx
 +c_1\int_{\mathbb{R}^d}|\nabla u(x)|^2 dx,$$and since $c_1<1$,
\begin{equation*}
|\epsilon|\int_{\mathbb{R}^d}|\nabla u(x)|^2\ dx \leq  C (\tau
+\|n_1\|_{L^{\infty}}) |\epsilon|\int_{\mathbb{R}^d} |u(x)|^2 dx +
C |\epsilon|\int_{\mathbb{R}^d} |f(x)| |u(x)|\ dx.
\end{equation*}
Arguing as in the proof of (\ref{julio2}) but replacing
(\ref{fi1real}) by this last inequality, we obtain for $\delta
> 0$
\begin{equation}\label{controlterminoeatractivo}
\begin{array}{ll}
 |\epsilon|\int_{\mathbb{R}^d}|\nabla u(x)||u(x)|\ dx \leq
C (\tau
+\|n_1\|_{L^{\infty}}+|\epsilon|)^{\frac{1}{2}}\int_{\mathbb{R}^d}|f(x)||u(x)|\
dx

\vspace{0.4cm}

\\ \hspace{0.5cm}  \leq C\left(
\left\|\frac{f}{|n|^{\frac{1}{2}}}\right\|^2_{X_{\rho}^*}+
\|f\|^2_{X_{\rho}^*}\right) + \delta
\||n|^{\frac{1}{2}}u\|^2_{X_{\rho}} + \delta (\tau +|\epsilon|)
\|u\|^2_{X_{\rho}}.
\end{array}
\end{equation}
From (\ref{controlatractivo1}), using (\ref{firsttermatractivo})-
 (\ref{controlterminoeatractivo}),
and  taking the sup  in $R \geq \rho$,  we get
\begin{equation*}
\begin{array}{lllll}
 \sup_{R\ge\rho}\frac{C_1}{R^3}\int_{B(0,R)} |u(x)|^2\ dx +
\left(\alpha \chi(d)+(1-\chi(d))\right)
\frac{\gamma}{2}\int_{\mathbb{R}^d}\frac {V(x)}{|x|}|u(x)|^2\ dx

 \vspace{0.4cm}

\\ +
C_1(d-3)\int_{\mathbb{R}^d}\frac{|u(x)|^2}{|x|^3}dx+C_1 \chi(d)
\int_{B(0,\rho)}\frac{|u(x)|^2}{|x|^2}dx+C_1 \chi(d)
\int_{\mathbb{R}^3} \frac{W(|x|)}{|x|}|u(x)|^2dx

\vspace{0.4cm}

\\
+C_1\|\nabla u\|^2_{X_{\rho}}  \leq
C\left(\|f\|^2_{X_{\rho}^*}+\left\|\frac{f}{|n|^{\frac{1}{2}}}\right\|^2_{X_{\rho}^*}\right)
+ \frac{C_1(d-3)}{2} \int_{\mathbb{R}^d}\frac{|u(x)|^2}{|x|^3}dx

\vspace{0.4cm}

\\ +\frac{C_1\chi(d)}{2}\int_{B(0,\rho)}\frac{|u(x)|^2}{|x|^2}dx
 + \frac{C_1}{2} \sup_{R\ge\rho}\frac{1}{R^3}
\int_{B(0,R)}|u(x)|^2\ dx
 + \frac{C_1}{2} \|\nabla u\|^2_{X_{\rho}}

\vspace{0.4cm}

\\
 +\delta(
|\epsilon| + \tau)\|u\|^2_{X_{\rho}} + \left(\frac{1}{4}
\chi(d)+(1-\chi(d))\right) (\delta+\beta_{\rho})
\||n|^{\frac{1}{2}}u\|^2_{X_{\rho}}, \end{array}
\end{equation*}
and (\ref{principalatractivo1}) follows.

\noindent \underline{Proof of  estimate
(\ref{principalatractivo2})}.

Fix $R \geq \rho$, if we argue as in the proof of
(\ref{principal2repulsivo}), then we get an estimate which is
similar to (\ref{acotacionstep2}). More precisely, if $d \geq 3$
we obtain
\begin{equation}\label{controlatractivo2b}
\begin{array}{llllll}
\frac 1{2R}\int_{B(0,R)}|\nabla u(x)|^2\ dx +
\frac{(d-1)}{4R^2}\int_ {|x|=R}|u(x)|^2d\sigma_R(x)

\vspace{0.4cm}

\\ + \frac{\tau}{2R}  \int_{B(0,R)}  |u(x)|^2 dx+ \frac{1}{2R}
\int_{B(0,R)}|n(x)|  |u(x)|^2 dx +
\frac{d-3}{4}\int_{|x|>R}\frac{|u(x)|^2}{|x|^3}\ dx

\vspace{0.4cm}

\\
+ \frac {\gamma}{2R}\int_{ B(0,R)}V(x)|u(x)|^2\ dx+ \frac
{\gamma}{2}\int_{|x|>R}\frac{V(x)}{|x|}|u(x)|^2\ dx

\vspace{0.4cm}

\\
-(1- \eta)\frac{d-3}{4}\int_{\mathbb{R}^d}\frac{|u(x)|^2}{|x|^3}dx
-\frac{\chi(d)}{2}\int_{\mathbb{R}^3}\frac{W(|x|)}{|x|} |u(x)|^2 \
dx

\vspace{0.4cm}

\\ \leq \int_{\mathbb{R}^d}  |f(x)||\nabla u(x)|\ dx +
\frac{(d+1)}{2}\int_{\mathbb{R}^d}  |f(x)|\frac{|u(x)|}{|x|}\ dx

\vspace{0.4cm}

\\
+ |\epsilon| \int_{\mathbb{R}^d} |\nabla u(x)||u(x)|\ dx + \frac
{1}{2}\int_{\mathbb{R}^d}\frac{V(x)}{|x|}|u(x)|^2\ dx +\frac{1}{2}
\int_{\mathbb{R}^d}\frac{\nabla n(x) \cdot x}{|x|}|u(x)|^2 dx .
\end{array}
\end{equation}

From  (\ref{principalatractivo1}) $$\int_{\mathbb{R}^d}
\frac{V(x)}{|x|}\ |u(x)|^2\
dx+\chi(d)\int_{\mathbb{R}^3}\frac{W(|x|)}{|x|} |u(x)|^2 \ dx <
\infty.$$
Then as in the proof of (\ref{principalatractivo1}), we
have
$$\|\nabla u\|^2_{X_{\rho}}
+ \sup_{R\ge\rho}\frac{1}{R^2}\int_{|x|=R}|u(x)|^2d\sigma_R(x) +
\|\ |n|^{\frac{1}{2}}u\|^2_{X_{\rho}} + \|\
V^{\frac{1}{2}}u\|^2_{X_{\rho}}$$ $$  +
\int_{\mathbb{R}^d}\frac{V(x)}{|x|}|u(x)|^2\ dx +
 \tau\ \|u\|^2_{X_{\rho}} +
(d-3)\int_{\mathbb{R}^d}\frac{|u(x)|^2}{|x|^3}\ dx < \infty.$$

In order to get the a priori estimate, we use
(\ref{principalatractivo1}),
(\ref{malo1}),(\ref{firsttermatractivo}), (\ref{malo2}),
(\ref{guillermo5}), (\ref{guillermo7}) and taking the  sup  in $R
\geq \rho$ in (\ref{controlatractivo2b}), we have for an absolute
constant $C_2$
\begin{equation*}
\begin{array}{lllll} C_2 \|\nabla u\|^2_{X_{\rho}} + \frac{C_2}{4}
\sup_{R\ge\rho}\frac{1}{R^2}\int_{|x|=R}|u(x)|^2d\sigma_R(x)
+\frac{1}{2} \|\ |n|^{\frac{1}{2}}u\|^2_{X_{\rho}} +C_2 \|\
V^{\frac{1}{2}}u\|^2_{X_{\rho}}

\vspace{0.4cm}

\\
 +
\frac{3C_2}{4}\sup_{R\ge\rho}\frac{1}{R^3}\int_{B(0,R)}|u(x)|^2dx
+\frac{\gamma}{2} \int_{\mathbb{R}^d}\frac{V(x)}{|x|}|u(x)|^2\ dx
+  \tau C_2 \|u\|^2_{X_{\rho}}

\vspace{0.4cm}

\\
 +
(d-3)C_2\int_{\mathbb{R}^d}\frac{|u(x)|^2}{|x|^3}\
dx+C_2\chi(d)\int_{\mathbb{R}^3}\frac{W(|x|)}{|x|} |u(x)|^2 \ dx

\vspace{0.4cm}

\\
\le
C\left(\|f\|^2_{X_{\rho}^*}+\left\|\frac{f}{|n|^{\frac{1}{2}}}\right\|^2_{X_{\rho}^*}\right)
 +
 \delta(|\epsilon|+ \tau)\|u\|^2_{X_{\rho}} +\frac{C_2}{2}
\sup_{R\ge\rho}\frac{1}{R^3}\int_{B(0,R)}|u(x)|^2dx

\vspace{0.4cm}

\\
 +
\frac{C_2}{2} \|\nabla u\|^2_{X_{\rho}} +\left(\frac{1}{4 \alpha}
\chi(d)+(1-\chi (d) \right) (\delta
+2\beta_{\rho})\||n|^{\frac{1}{2}}u\|^2_{X_{\rho}}.
\end{array}
\end{equation*}

If $d>3$, we take $\delta$ small and since $\beta_{\rho}<1/4$,
the terms
$$\frac{C_2}{2}   \|\nabla u\|^2_{X_{\rho}}, \hspace{0.3cm} \delta \tau
\|u\|^2_{X_{\rho}},
  \hspace{0.3cm}  (\delta
+2\beta_{\rho})\||n|^{\frac{1}{2}}u\|^2_{X_{\rho}} \hspace{0.3cm}
\textrm{and} \hspace{0.3cm} \frac{C_2}{2}
\sup_{R\ge\rho}\frac{1}{R^3} \int_{B(0,R)}|u(x)|^2\ dx$$ can be
absorbed by their analogous on the left hand side of the above
inequality in order to obtain (\ref{principalatractivo2}).

If $d=3$, we will need that $$\beta_\rho < \alpha <
\frac{1}{2}-\int_0^\infty tW(t)dt.$$

\vspace{0.5cm}

The proof of (\ref{basicatractivo}) is similar to the previous
one, but in this case, we  have to replace $
C(\|f\|^2_{X_{\rho}^*}+\||n|^{-1/2}|f|\|^2_{X_{\rho}^*}) $ by $
C(\tau_0)\|f\|^2_{X_{\rho}^*} $ for all $\tau\ge\tau_0.$ In order
to do this, we have to modify the estimate
(\ref{controlterminoeatractivo}) as follows:
\begin{equation*}
\begin{array}{ll}   |\epsilon|\int_{\mathbb{R}^d}|\nabla u(x)||u(x)| dx
\leq C (\tau
+\|n_1\|_{L^{\infty}}+|\epsilon|)^{\frac{1}{2}}\int_{\mathbb{R}^d}|f(x)||u(x)|
dx

\vspace{0.4cm}

\\
\leq
C\left(\frac{\|n_1\|_{L^{\infty}}}{\tau_0}\right)^{\frac{1}{2}}
\|f\|_{X_{\rho}^*}\||n|^{\frac{1}{2}}u\|_{X_{\rho}} +
C(\tau_0)(\tau
+|\epsilon|)^{\frac{1}{2}}\|f\|_{X_{\rho}^*}\|u\|_{X_{\rho}}
\hspace{1cm} \tau\ge \tau_0. \end{array}
\end{equation*}
\hfill$\square$
\subsection{Proof of  Theorem \ref{teorema2}}
We start with the proof of  estimate (\ref{sofia9a}). Estimate (\ref{sofia9b}) is treated in a
similar way.

Following the proof of the Theorem \ref{teorema1}, for $R\ge\rho$
fixed, we consider the function $\psi_R$ defined by (\ref{psiR}).

As in the proof of (\ref{juan10}), it suffices to prove that
the operator $\psi_R^{\frac{1}{2}}D^{\frac{1}{2}}$ is
$H-$su\-per\-smooth on $[\tau_0,\infty)$ ($\psi_R^{\frac{1}{2}}D^{\frac{1}{2}}P_{[\tau_0, \infty)}$ is
$H-$su\-per\-smooth, see \cite{rs}, vº IV pag.163), which means
 that for all $f\in
D(D^{\frac{1}{2}}\psi_R^{\frac{1}{2}}) \subset L^2(\mathbb{R}^d),$
$\tau\ge \tau_0$ and $\epsilon> 0,$ there exists  a positive
constant $C(\beta_\rho, \tau_0)$ independent of $\tau$, $\epsilon$
and $R$ such that
$$
\|\psi_R^{\frac{1}{2}}D^{\frac{1}{2}}R_H(\tau\pm
i\epsilon)D^{\frac{1}{2}}\psi_R^{\frac{1}{2}}f\|_{L^2} \le
C(\beta_{\rho},\tau_0)\ \|f\|_{L^2}.
$$ (Here, we use that the operators
$e^{itH}$ and $P_{\tau_0}$ commute).

 Arguing as in the proof of
Theorem \ref{teorema1}, but using (\ref{playa1}) instead
(\ref{agosto1}), we obtain (\ref{sofia9a}).

(\ref{sofia9b}) follows in a similar way.

\hfill $\square$
\section{Potentials without smallness assumptions}
\subsection{Estimates for the resolvent}
As in the previous  sections  we  start with  the  Helmholtz equation.
\begin{proposition}\label{proposicion1}
Let $V_1$ and $V_2$ be two real valued potentials as in Theorem \ref{teorema3}, and let u  be  a
solution  of  the  equation
\begin{equation}\label{ecuacionsinpeque}
-\Delta u+(V_1+V_2)\ u\pm i\epsilon u + \tau u=f, \quad
\epsilon\neq 0,\ x\in\mathbb{R}^d\ \hspace{1cm} d\ge 2.
\end{equation}
\begin{itemize}
\item If $d \geq 3$, given $\tau_0>0$ there exits  $B(\tau_0)$
such that
\begin{equation}\label{chosi1}
\tau_0B(\tau_0)=o(1) \hspace{1cm} \tau_0 \rightarrow 0
\end{equation}
 and   the
following a priori estimate holds:
\begin{equation}\label{mayo10}
\begin{array}{ll} \hspace{1cm}
\|\nabla u\|^2_{X_{\tau_0}} + \ \tau\ \|u\|^2_{X_{\tau_0}} +
(d-3)\int_{|x|>\tau_0}\frac{|u(x)|^2}{|x|^3}\ dx

\vspace{0.4cm}

\\  +
\sup_{R \ge \tau_0}\frac{1}{R^2}\int_{|x|=R}|u(x)|^2d\sigma_R(x)
 \leq C \|f\|^2_{X^*_{\tau_0}} \hspace{1.7cm} \text{ for } \tau \geq B(\tau_0)
. \end{array}
\end{equation}
 \item If $d =2$, there exists  $\tau_0>0$ and $B(\tau_0)$
 satisfying (\ref{chosi1})
 and $u$
verifies the following a priori estimate:
\begin{equation}\label{mayo10b}
\begin{array}{ll}
\|\nabla u\|^2_{X_{\tau_0}} + \ \tau\ \|u\|^2_{X_{\tau_0}} +
\sup_{R \ge \tau_0}\frac{1}{R^2}\int_{|x|=R}|u(x)|^2d\sigma_R(x)

\vspace{0.4cm}

\\ \hspace{3cm} \leq C
\|f\|^2_{X^*_{\tau_0}} \hspace{1.7cm} \text{ for } \tau \geq B(\tau_0) .
\end{array}
\end{equation}
\end{itemize}
Here, $C$ is an absolute  positive constant independent of
$\epsilon$ , $\tau$ and $\tau_0$.
\end{proposition}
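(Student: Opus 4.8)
The plan is to follow the multiplier scheme already used for Theorem~\ref{trepulsivo} and Theorem~\ref{tatractivo}, but now without any smallness on the potentials: the smallness will be recovered instead from the decay hypotheses \eqref{c1s}--\eqref{c3s} together with the restriction $\tau\ge B(\tau_0)$ and the choice of a cutoff scale $\tau_0$. Concretely, I would use the identities \eqref{alberto3} and \eqref{alberto4} from Appendix~1 with the pair $(\Phi_R,\varphi_R)$ of \eqref{PhiR}--\eqref{varphiR}, now localized at radius $R\ge\tau_0$ rather than $R>0$. The quadratic form $\nabla\bar u\cdot D^2\Phi_R\cdot\nabla u$ gives the control of $\|\nabla u\|^2_{X_{\tau_0}}$ on $B(0,R)$, the boundary term $\frac{(d-1)}{4R^2}\int_{|x|=R}|u|^2d\sigma_R$ comes from the distributional Laplacian of $2\varphi_R-\Delta\Phi_R$, and the positivity of $\tau$ together with $\varphi_R\ge 0$ on $B(0,R)$ produces $\frac{\tau}{2R}\int_{B(0,R)}|u|^2$. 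For $d\ge 3$ the term $\frac{(d-1)(d-3)}{|x|^3}\chi_{\{|x|>R\}}$ supplies $(d-3)\int_{|x|>\tau_0}|u|^2/|x|^3$; for $d=2$ this term is absent, which is exactly why one only gets \eqref{mayo10b}.

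The key new point is how to absorb the potential terms $-\tfrac12\int \nabla(V_1+V_2)\cdot\nabla\Phi_R\,|u|^2$ and $\int (V_1+V_2)(\varphi_R-\tfrac12\Delta\Phi_R)|u|^2$ that appear on the right of \eqref{alberto4}. Since $|\nabla\Phi_R|\le 1$, $|\varphi_R|\le 1/(2|x|)$ and $|\Delta\Phi_R|\le d/|x|$, the decay bounds \eqref{c1s}--\eqref{c3s} turn all of these into integrals dominated, after splitting into the dyadic annuli $C_j$ with $j\ge j_0$ and the ball $B(0,\tau_0)$, by
$$
a\int_{\mathbb{R}^d}\frac{|u(x)|^2}{(1+|x|)^{\gamma+1}}\,dx
\ \le\ a\,C\,\Big(\tau_0^{-\gamma}+\sum_{j\ge j_0}2^{-j\gamma}\Big)\,\big\|u\big\|^2_{X_{\tau_0}}
\ \le\ C\,a\,\tau_0^{-\gamma}\,\|u\|^2_{X_{\tau_0}};
$$
here I used $\gamma>0$ so that the geometric series converges and produces a factor that is small when $\tau_0\to 0$ is \emph{not} what we want --- rather, since $\|u\|^2_{X_{\tau_0}}\le \tau^{-1}\cdot(\tau\|u\|^2_{X_{\tau_0}})$ and the left side already carries $\tau\|u\|^2_{X_{\tau_0}}$, the potential contributions are absorbed as soon as $\tau\ge B(\tau_0)$ with $B(\tau_0)$ a fixed multiple of $a\,\tau_0^{-\gamma}$; this gives $\tau_0 B(\tau_0)\sim a\,\tau_0^{1-\gamma}$, which is $o(1)$ as $\tau_0\to0$ provided $\gamma<1$, and for $\gamma\ge1$ one sharpens the annulus sum (it is then bounded independently of $\tau_0$) so that $B(\tau_0)$ may even be taken bounded and $\tau_0 B(\tau_0)=o(1)$ trivially. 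The radial-derivative hypothesis \eqref{c3s} is used precisely to bound the \emph{one-signed} part of $-\tfrac12\Phi_R'\partial_rV_1$ that is not controlled by $|V_1|$ alone, exactly as $\partial_rV\le 0$ was exploited in the repulsive case.

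Having absorbed the potential terms and the cross terms $|\epsilon|\int|\nabla u||u|$ and $\int|f|\,|u|/|x|$ into the left side exactly as in the proofs of \eqref{principal2repulsivo} and \eqref{principalatractivo2} (Cauchy--Schwarz against the $X_{\tau_0}$--$X^*_{\tau_0}$ pairing, together with \eqref{tau<epsilon} and the energy identities \eqref{alberto1}--\eqref{alberto2} to handle the $\epsilon$--term), one takes the supremum over $R\ge\tau_0$ and obtains \eqref{mayo10} and \eqref{mayo10b}. The finiteness of the left-hand sides needed to justify taking suprema follows as before: $f\in L^2$ gives $u\in W^{1,2}$ via \eqref{alberto1} (here the potentials are bounded, so no smallness is needed for this step), and then Hardy's inequality handles $\int|f||u|/|x|$. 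The main obstacle is the bookkeeping that pins down $B(\tau_0)$ with the correct asymptotics \eqref{chosi1} uniformly in $\epsilon$ and $\tau$, i.e.\ making sure that the constant multiplying $\|u\|^2_{X_{\tau_0}}$ coming from the potentials is exactly of size $a\,\tau_0^{-\gamma}$ (or better) and strictly smaller than the available $\tau$; once the dyadic sums are organized this is routine, but it is the step where the hypotheses \eqref{c1s}--\eqref{c3s} and the restriction to $\tau\ge B(\tau_0)$ are genuinely used. In dimension $d=2$ one additionally fixes $\tau_0$ small enough at the outset so that the missing Hardy-type term $(d-3)\int|u|^2/|x|^3$ is not needed to close the estimate; everything else goes through verbatim.
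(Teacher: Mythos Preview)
Your overall scheme is the paper's: the same multipliers $(\Phi_R,\varphi_R)$ from \eqref{PhiR}--\eqref{varphiR}, the same plan of bounding every potential contribution by a multiple of $\|u\|^2_{X_{\tau_0}}$ and absorbing it into $\tau\|u\|^2_{X_{\tau_0}}$ once $\tau\ge B(\tau_0)$. There is, however, a real gap in how you treat $V_2$. You write the potential term as $-\tfrac12\int\nabla(V_1+V_2)\cdot\nabla\Phi_R\,|u|^2$ coming from \eqref{alberto4}, but hypothesis \eqref{c3s} bounds only $\partial_rV_1$ (and only from above), while nothing whatsoever is assumed on $\nabla V_2$; with \eqref{alberto4} alone the $V_2$ piece simply cannot be estimated. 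The paper uses instead identity \eqref{alberto14}, in which the $V_2$ contribution has already been integrated by parts so that only $|V_2|$ appears, paired with $\Delta\Phi_R$ and with $\nabla(|u|^2)$. This is precisely why \eqref{c2s} demands one extra power of decay for $V_2$ compared with \eqref{c1s}, and it is the step where the split $V=V_1+V_2$ is genuinely used; your later remark that \eqref{c3s} handles the one-signed part of $\partial_rV_1$ shows you see half of the picture but does not explain $V_2$.

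Two smaller corrections. In $d=2$ the coefficient $(d-1)(d-3)=-1$ is negative, so the term $\tfrac14\int_{|x|>R}|u|^2/|x|^3$ is not ``missing'' but appears on the right-hand side and must itself be absorbed; the paper bounds it by $\tfrac{1}{4\tau_0^2}\|u\|^2_{X_{\tau_0}}$ via the dyadic sum, then fixes $\tau_0$ so that $\tfrac{1}{2B(\tau_0)\tau_0^2}<1$, after which it is swallowed by $\tau\|u\|^2_{X_{\tau_0}}$ for $\tau\ge B(\tau_0)$. Finally, your dyadic bookkeeping for $B(\tau_0)$ is off: the weights are $(1+|x|)^{-\gamma-1}$, bounded near the origin, so the annuli with $2^j<1$ contribute $O(2^j)$ rather than $2^{-j\gamma}$ (your claim that for $\gamma\ge1$ the sum becomes bounded is backwards). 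The actual $\tau_0$-dependence in the paper's explicit $B(\tau_0)$, equation \eqref{tonto2}, enters through the terms carrying the extra factor $|\Delta\Phi_R|\le d/|x|$, not through the bare $(1+|x|)^{-\gamma-1}$ integrals.
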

\proof By a density argument we can assume without loss of
generality that $f\in L^2(\mathbb{R}^d).$

We begin with the case $d \geq 3$.  For $R>\tau_0$ fixed, we
consider the functions $\Phi_R$ and $\varphi_R$ defined by
(\ref{PhiR}) and (\ref{varphiR}). If we put $\Phi=\Phi_R$ and
$\varphi=\varphi_R$ in (\ref{alberto14}), arguing as we did to get
(\ref{acotacionstep2}), we obtain
$$\frac 1{2R}\int_{B(0,R)}|\nabla u(x)|^2\ dx
+\frac{\tau}{2R}\int_{B(0,R)}|u(x)|^2dx$$
$$
+\frac{(d-1)(d-3)}{4}\int_{|x|>R}\frac{|u(x)|^2}{|x|^3}\ dx
+\frac{(d-1)}{8R^2}\int_ {|x|=R}|u(x)|^2d\sigma_R(x)
$$
\begin{equation}\label{acotacionstep2sp}
\leq \frac{1}{2} \int_{\mathbb{R}^d} | \nabla V_1(x) \cdot \nabla
\Phi_R(x)| \ |u(x)|^2dx  + \frac{1}{2} \int_{\mathbb{R}^d}
|V_2(x)|\ |\nabla (|u|^2)(x)|dx
\end{equation}
$$ + \frac{1}{2}  \int_{\mathbb{R}^d}
\varphi_R(x)|V_1(x)+V_2(x)| \ |u(x)|^2dx + \frac{1}{2}
\int_{\mathbb{R}^d} |V_2(x)| \ | \Delta \Phi_R(x)| \ |u(x)|^2dx$$
$$ +\int_{\mathbb{R}^d} |f(x)|\ |\nabla u(x)|dx +
\int_{\mathbb{R}^d}\varphi_R(x)\ |f(x)| \ |u(x)|dx $$ $$+
\frac{1}{2}
 \int_{\mathbb{R}^d}|f(x)| \ | \Delta \Phi_R(x)|\ |u(x)|dx
 + |\epsilon| \int_{\mathbb{R}^d}|u(x)|\ |\nabla u(x)|dx.
$$

As in the proof of theorem \ref{trepulsivo} and theorem
\ref{tatractivo} and using (\ref{c1s})-(\ref{c3s}), we can see
that the left hand side of (\ref{acotacionstep2sp}) is independent
of $R \geq \tau_0$ and it is finite. Then,  taking  the $sup$  in $R
\geq \tau_0$ on the right hand side of (\ref{acotacionstep2sp}),
we have
$$\| \nabla u \|^2_{X_{\tau_0}}+  \tau \|u  \|^2_{X_{\tau_0}}$$
$$+(d-3)\int_{|x|>\tau_0}\frac{|u(x)|^2}{|x|^3}\ dx
+\sup_{R \geq \tau_0} \frac{1}{R^2}\int_
{|x|=R}|u(x)|^2d\sigma_R(x) < \infty.$$

To prove the a priori estimate, we study the terms on the left
hand side of (\ref{acotacionstep2sp}). $C$ will denote an absolute
constant independent of $\tau$, $\tau_0$, $\epsilon$ and $a $ (the
constant in (\ref{c1s})-(\ref{c3s})).

It is easy to check
\begin{equation}\label{drv1}
  \int_{\mathbb{R}^d}  | \nabla V_1(x) \cdot
\nabla \Phi_R(x)| \ |u(x)|^2dx 
 \leq  a  \left( 1 + \frac{2\gamma}{1-2^\gamma} \max
\left\{1,\frac{1}{\tau_0}-1\right\}\right)\|u \|^2_{X_{\tau_0}},
\end{equation}

\begin{eqnarray}\label{v2a}
\int_{\mathbb{R}^d} |V_2(x)|\ |\nabla (|u|^2)(x)|dx \leq 2a^2 \|u
\|^2_{X_{\tau_0}}+\frac{1}{8} \| \nabla u  \|^2_{X_{\tau_0}},
\end{eqnarray}
\begin{eqnarray}\label{v2b}
\frac{1}{2} \int_{\mathbb{R}^d} \varphi_R(x) \ |V_1(x)+V_2(x)| \
|u(x)|^2dx  \leq  a \|u \|^2_{X_{\tau_0}},
\end{eqnarray}

\begin{equation}\label{mayo4}
\frac{1}{2}  \int_{\mathbb{R}^d} |V_2(x)| \ | \Delta \Phi_R(x)| \
|u(x)|^2 \leq  \frac{ad}{2}  \left( 1 + \frac{2\gamma}{1-2^\gamma}
\max \left\{1,\frac{1}{\tau_0}-1\right\}\right)\|u
\|^2_{X_{\tau_0}},
\end{equation}

\begin{equation}\label{mayo5}
\begin{array}{lll}
 \int_{\mathbb{R}^d}\varphi_R(x)|f(x)| \ |u(x)|dx +
 \int_{\mathbb{R}^d}|f(x)| \ | \Delta \Phi_R(x)| \ |u(x)|dx

\vspace{0.4cm}

\\ \leq \frac{d}{R} \int_{B(0,R)}|f(x)| |u(x)|dx + (d-1) \int_{|x|
> R}\frac{1}{|x|}|f(x)|\ |u(x)|dx

\vspace{0.4cm}

\\ \hspace{2cm} \leq d^2C(\delta) \|f \|^2_{X^*_{\tau_0}} +  \|u
\|^2_{X_{\tau_0}}, \end{array}
\end{equation}
and  as in the proof
of estimate (\ref{principalatractivo1})
\begin{equation}\label{terminoesp}
\begin{array}{ll} |\epsilon|\int_{\mathbb{R}^d}|\nabla u(x)||u(x)|\ dx \leq
C\|f\|^2_{X_{\tau_0}^*}+ \delta (\tau+1)\|u\|^2_{X_{\tau_0}}

\vspace{0.4cm}

\\ \;\; + \frac{1}{8}\|\nabla u\|_{X_{\tau_0}} + \frac{d-1}{16} \sup_{R
\ge \tau_0}\frac{1}{R^3}\int_{B(0,R)}|u(x)|^2dx. \end{array}
\end{equation}

Let us  define
\begin{equation}\label{tonto2}
B(\tau_0)= 16 a(d^2-1) \left( 1 + \frac{2^\gamma}{1-2^\gamma} \max
\left\{1,\frac{1}{\tau_0}-1\right\}\right)+a(2a+1).
\end{equation}

From (\ref{acotacionstep2sp}), using
(\ref{drv1})-(\ref{terminoesp}) and taking $$\delta < \min
\left\{\frac{B(\tau_0)}{4},\frac{1}{4}\right\},$$ we get
$$
\frac 1{R}\int_{B(0,R)}|\nabla u(x)|^2\ dx
+\frac{\tau}{R}\int_{B(0,R)}|u(x)|^2dx $$
\begin{equation*}
+(d-3)\int_{|x|>R}\frac{|u(x)|^2}{|x|^3}\ dx +\frac{1}{R^2}\int_
{|x|=R}|u(x)|^2d\sigma_R(x)
\end{equation*}
$$ \leq C\|f\|^2_{X_{\tau_0}^*} + \left(\frac{\tau}{2}+\frac{B(\tau_0)}{4}\right) \|u\|^2_{X_{\tau_0}} +
\frac{1}{2}\|\nabla u\|^2_{X_{\tau_0}} + \frac{d-1}{16} \sup_{R\ge
\tau_0}\frac{1}{R^3} \int_{B(0,R)}|u(x)|^2\ dx.$$

If $\tau >B( \tau_0)$, by taking the $sup$  in the above inequality
and using (\ref{guillermo5}), the terms
$$\left(\frac{\tau}{2}+\frac{B(\tau_0)}{4}\right)
 \|u\|^2_{X_{\tau_0}}, \hspace{0.5cm}  \frac{1}{2}\|\nabla
u\|^2_{X_{\tau_0}} \hspace{0.5cm} \textrm{and} \hspace{0.5cm}
\frac{d-1}{16} \sup_{R\ge \tau_0}\frac{1}{R^3}
\int_{B(0,R)}|u(x)|^2\ dx$$ on the right hand side can be passed to  the l.h.s.
 and we get (\ref{mayo10}).

Id $d=2$, we write (\ref{acotacionstep2sp}) as
$$\frac 1{2R}\int_{B(0,R)}|\nabla u(x)|^2\ dx
+\frac{\tau}{2R}\int_{B(0,R)}|u(x)|^2dx +\frac{1}{8R^2}\int_
{|x|=R}|u(x)|^2d\sigma_R(x)
$$
$$\leq \frac{1}{2} \int_{\mathbb{R}^2}  \nabla V_1(x) \cdot \nabla
\Phi_R(x)\ |u(x)|^2dx  + \frac{1}{2} \int_{\mathbb{R}^2} |V_2(x)|\
|\nabla (|u|^2)(x)|dx
$$
\begin{equation}\label{acotacionstep2spb}
+ \frac{1}{2} \left| \int_{\mathbb{R}^2}
\varphi_R(x)(V_1(x)+V_2(x))\ |u(x)|^2dx\right| + \frac{1}{2}
\left| \int_{\mathbb{R}^2} V_2(x) \Delta \Phi_R(x)
|u(x)|^2dx\right|
\end{equation}
$$ +\int_{\mathbb{R}^2} |f(x)|\ |\nabla u(x)|dx + \left|
\int_{\mathbb{R}^2}\varphi_R(x)f(x) \bar{u}(x)dx \right|$$ $$+
\frac{1}{2} \left| \int_{\mathbb{R}^2}f(x) \Delta \Phi_R(x)
\bar{u}(x)dx \right| + |\epsilon| \int_{\mathbb{R}^2}|u(x)|\
|\nabla u(x)|dx + \frac{1}{4}\int_{|x|>R}\frac{|u(x)|^2}{|x|^3}\
dx. $$
Let us  study the last term in the above inequality.

Using (\ref{tonto2}), we define $\tau_0$, and therefore
$B(\tau_0)$, such that
$$\frac{1}{2B(\tau_0)\tau_0^2} <1$$ and let us take $j_0$  satisfying
$2^{j_0}< \tau_0 \leq 2^{j_0+1}$.

We know, for $R>\tau_0$, that
$$\frac{1}{4}\int_{|x|>R}\frac{|u(x)|^2}{|x|^3}dx \leq
\frac{1}{4\tau_0^3}\int_{|x|>\tau_0}|u(x)|^2dx < \infty$$ and
$$\frac{1}{4}\int_{|x|>R}\frac{|u(x)|^2}{|x|^3}dx \leq \frac{1}{4}
\sum_{j \geq j_0}\int_{|x| \sim 2^j}\frac{|u(x)|^2}{|x|^3}dx  \leq
\frac{1}{4 \tau_0^2} \|u \|^2_{X_{\tau_0}} \leq
\frac{B(\tau_0)}{2} \|u \|^2_{X_{\tau_0}}.$$ Then (\ref{mayo10b})
follows as in the case $d \geq 3 $ since the  term
$\frac{1}{4}\int_{|x|>R}\frac{|u(x)|^2}{|x|^3}dx$ can be taken  to
the l.h.s. and  can be  absorbed  by $ \tau \|u \|^2_{X_{\tau_0}}$
if $ \tau \geq B(\tau_0)$.

\noindent

\begin{remark}
Suppose that $\tau_0=1$ in Proposition \ref{proposicion1} and $d
\geq 3$, then
$$B(a) \equiv B(1)= 16 a(d^2-1)  \left( 1 + \frac{2^\gamma}{1-2^\gamma}
\right)+a(2a+1).$$ Let  be $\eta >0$  and assume  that $a$, the
constant in (\ref{c1s})-(\ref{c3s}), satisfies that $B(a) \leq
\eta$. Then, by Proposition \ref{proposicion1}, we have
$$\|\nabla u\|^2_{X_{1}} + \ \tau\ \|u\|^2_{X_{1}} \leq C
\|f\|^2_{X^*_{1}}, \;\;\; \tau \geq \eta.$$

and if we follow the lines of the proof of Theorem \ref{teorema2}, we can
deduce
\begin{equation*}
 \sup_{R\ge1}\frac{1}{R}\int_{B(0,R)}\int_{-
\infty}^{\infty}
|D^{\frac{1}{2}}e^{itH}\mathcal{P}_{\eta}u_0(x)|^2 dtdx \leq C
\|u_0\|_2^2,
\end{equation*}
and
\begin{equation*}
\tau \sup_{R\ge1}\frac{1}{R}\int_{B(0,R)}\int_{- \infty}^{\infty}
|e^{itH}\mathcal{P}_{\eta}u_0(x)|^2 dtdx \leq C\ \|u_0\|_2^2
\hspace{1cm} \tau \geq \eta.
\end{equation*}

\end{remark}

\noindent

\begin{lemma}\label{lema1}
\emph{(Ikebe-Saito)} Let $\tau_0$  and $\tau_1$ two positive real
numbers such that $\tau_0 < \tau_1$ and $\alpha >0$. Let us
consider the sequences
$$\{ \tau_n+i \epsilon_n \}_{n \in \mathbb{N}} \hspace{1cm} \tau_0
< \tau_n < \tau_1 \hspace{0.5cm} \text{and} \hspace{0.5cm}
0<\epsilon_n < 1,$$ $\{ f_n \}_{n \in \mathbb{N}} \in
L^2((1+|x|)^{1+\alpha}$ and $u_n \in L^2((1+|x|)^{-1-\alpha})$ the
solution of
\begin{equation*}
-\Delta u+(V_1+V_2)\ u\pm i\epsilon_n u + \tau_n u=f_n, \;\;\; \
x\in\mathbb{R}^d\  \hspace{1cm} d\ge 2.
\end{equation*}
with $V_1$ and $V_2$ satisfying the conditions of Theorem
\ref{teorema3}.

If $f_n \; \rightarrow \; f$ in $L^2((1+|x|)^{1+\alpha}$ and
$\tau_n+i \epsilon_n \; \rightarrow \; \tau +i \epsilon$, then
there exists $\lim_{n \rightarrow \infty}u_n=u$ in
$L^2((1+|x|)^{-1-\alpha}$ and  such  that $u$ is the unique solution of
\begin{equation*}
-\Delta u+(V_1+V_2)\ u\pm i\epsilon u + \tau u=f, \hspace{0.4cm}
x\in\mathbb{R}^d  \hspace{1cm} d\ge 2.
\end{equation*}
\end{lemma}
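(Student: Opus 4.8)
The plan is to combine the a priori resolvent estimates of Proposition \ref{proposicion1} with the classical Ikebe--Saito uniqueness theorem \cite{IS} for the Helmholtz equation with long range potentials. First I would fix notation: write $z_n=\tau_n+i\epsilon_n$ and let $u_n$ solve $(-\Delta+V_1+V_2+z_n)u_n=f_n$ with the sign convention of the statement (the two choices $\pm$ are handled identically, so I will do one of them). Since $\tau_0<\tau_n<\tau_1$ and $0<\epsilon_n<1$, the sequence $\{z_n\}$ lives in a fixed compact set; by hypothesis $z_n\to z=\tau+i\epsilon$ and $f_n\to f$ in $L^2((1+|x|)^{1+\alpha})=(L^2((1+|x|)^{-1-\alpha}))^*$.

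The key step is a uniform bound on $\|u_n\|_{L^2((1+|x|)^{-1-\alpha})}$. I would derive this from Proposition \ref{proposicion1}: for $\tau_n\geq B(\tau_0)$ (which we may assume by choosing $\tau_0$ so that $\tau_0>B(\tau_0)$, i.e.\ working on a spectral window bounded away from zero), estimates \eqref{mayo10}--\eqref{mayo10b} give
\begin{equation*}
\|\nabla u_n\|^2_{X_{\tau_0}}+\tau_n\|u_n\|^2_{X_{\tau_0}}\leq C\|f_n\|^2_{X^*_{\tau_0}},
\end{equation*}
with $C$ independent of $n$ (in particular of $\epsilon_n$). Using the elementary embeddings $\|g\|_{X^*_{\tau_0}}\leq C_{\tau_0,\alpha}\|g\|_{L^2((1+|x|)^{1+\alpha})}$ and $\|g\|_{L^2((1+|x|)^{-1-\alpha})}\leq C_{\tau_0,\alpha}\|g\|_{X_{\tau_0}}$ (summing the weighted dyadic pieces, which converges precisely because $\alpha>0$), we get
\begin{equation*}
\|u_n\|_{L^2((1+|x|)^{-1-\alpha})}+\|\nabla u_n\|_{L^2((1+|x|)^{-1-\alpha})}\leq C\|f\|_{L^2((1+|x|)^{1+\alpha})}
\end{equation*}
uniformly in $n$. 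Then a weak-$*$ compactness argument in the weighted space, together with local elliptic regularity and a Rellich-type local compactness (the weight decay $(1+|x|)^{-1-\alpha}$ controls the tail), extracts a subsequence $u_{n_k}\rightharpoonup u$ converging strongly in $L^2_{\mathrm{loc}}$; passing to the limit in the equation shows $u$ solves $(-\Delta+V_1+V_2+z)u=f$ and inherits the same a priori bound.

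The main obstacle — and the point where Ikebe--Saito \cite{IS} is essential — is \emph{uniqueness} of the limiting solution in $L^2((1+|x|)^{-1-\alpha})$, equivalently showing the limit does not depend on the subsequence (so the whole sequence converges) and, when $\epsilon\neq 0$, that the solution is genuinely unique while when $\epsilon=0$ the $\pm$ label picks out the correct outgoing/incoming solution via a radiation condition. This is exactly the content of the Ikebe--Saito uniqueness theorem under hypotheses \eqref{c1s}--\eqref{c3s}: the potential splitting $V_1+V_2$ with $V_1$ long range, $\partial_r V_1$ short range, and $V_2$ short range is tailored to their framework. Granting that theorem, uniqueness of the limit forces convergence of the full sequence; the strong convergence in $L^2((1+|x|)^{-1-\alpha})$ (not merely weak) is then upgraded by noting that any weak limit point satisfies the same equation with the same data, hence equals $u$, and that the uniform a priori bound plus the equation give equicontinuity of the tails. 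I would also record that the construction is consistent as $\epsilon_n\to 0$, which is the case actually used to define the boundary values $R_H(\tau\pm i0)$ needed for the Kato smoothing argument in the proof of Theorem \ref{teorema3}. $\hfill\square$
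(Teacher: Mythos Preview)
Your proposal has a genuine gap in the step where you obtain the uniform bound on $\|u_n\|_{L^2((1+|x|)^{-1-\alpha})}$. You invoke Proposition~\ref{proposicion1}, but that proposition only yields the estimate for $\tau\geq B(\tau_0)$, not for all $\tau\in(\tau_0,\tau_1)$. Your parenthetical remark that ``we may assume $\tau_0>B(\tau_0)$'' is not permissible: $\tau_0$ is \emph{given} in the statement of the lemma and cannot be chosen. Moreover, inspecting the explicit formula~\eqref{tonto2} for $B(\tau_0)$, one sees that without a smallness hypothesis on the constant $a$ in \eqref{c1s}--\eqref{c3s} there is no reason to have $B(\tau_0)\leq\tau_0$; the entire point of Section~4 (``Potentials without smallness assumptions'') is precisely that no such smallness is assumed.

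This is not merely a technical oversight but a circularity in the logical structure. In the paper, Lemma~\ref{lema1} is used in Proposition~\ref{proposicion2} to obtain the a~priori estimate in the range $[\tau_0,B(\tau_0)]$ where Proposition~\ref{proposicion1} does \emph{not} apply; Theorem~\ref{tsinpeque} then glues the two ranges together. So deriving the uniform bound needed for Lemma~\ref{lema1} from Proposition~\ref{proposicion1} would make the whole argument collapse on the interval $[\tau_0,B(\tau_0)]$. The paper's own proof avoids this entirely: it does not attempt an internal argument at all, but simply records that the lemma is a restatement of Theorem~1.3, Lemma~1.11 and Theorem~1.4 of Ikebe--Saito~\cite{IS}, whose limiting absorption principle provides both the uniform bound and the convergence directly, independently of Proposition~\ref{proposicion1}. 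In short, the compactness and uniqueness parts of your sketch are reasonable, but the input bound must come from \cite{IS} itself, not from the paper's Proposition~\ref{proposicion1}.
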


\proof

This lemma is a consequence of  Theorem 1.3, Lemma 1.11 and
Theorem 1.4 of \cite{IS}.

\noindent

\begin{proposition}\label{proposicion2}
Let  $\tau_0$, $\tau_1$, $\alpha$,
 $V_1$ and $V_2$ as in lemma
\ref{lema1} and $ f \in L^2((1+|x|)^{1+\alpha})$. Then, the
solution $u \in L^2((1+|x|)^{-1-\alpha})$  of
\begin{equation*}
-\Delta u+(V_1+V_2)\ u\pm i\epsilon + \tau u=f, \hspace{0.4cm}
x\in\mathbb{R}^d \;\;\;\;\; 0< \epsilon <1  \hspace{1cm} d\ge 2,
\end{equation*}
satisfies, for $\tau_0<\tau<\tau_1$, the a priori estimate
\begin{equation}\label{is}
\|u\|_{L^2((1+|x|)^{-1-\alpha})}+\|\nabla
u\|_{L^2((1+|x|)^{-1-\alpha})} \le
C\|f\|^2_{L^2((1+|x|)^{1+\alpha})},
\end{equation}
where $C$ is a constant that only depends on $\tau_0$,  and
$\tau_1$.
\end{proposition}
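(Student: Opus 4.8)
The plan is to prove \eqref{is} by combining an elementary weighted energy identity with a compactness/contradiction argument resting on the Ikebe--Saito uniqueness theorem (Lemma \ref{lema1}). First I would reduce matters to the single weighted $L^2$ bound
\[
\|u\|_{L^2((1+|x|)^{-1-\alpha})}\le C(\tau_0,\tau_1)\,\|f\|_{L^2((1+|x|)^{1+\alpha})}.
\]
Indeed, since $f\in L^2((1+|x|)^{1+\alpha})\subset L^2(\mathbb R^d)$ and $\epsilon\neq 0$, the solution $u$ lies in $L^2(\mathbb R^d)$ and, because $V_1,V_2$ are bounded, in $H^2(\mathbb R^d)$. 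Multiplying the equation by $\bar u\,w$ with $w$ a smooth weight comparable to $(1+|x|)^{-1-\alpha}$ (e.g. $w=(1+|x|^2)^{-(1+\alpha)/2}$), integrating over $\mathbb R^d$ (the boundary terms vanish since $u,\nabla u\in L^2$ and $w,\nabla w$ are bounded), and taking real parts, the imaginary term $\pm i\epsilon\int|u|^2w$ drops out and one obtains
\[
\int_{\mathbb R^d}|\nabla u|^2\,w\,dx\le C(a,\alpha,\tau_1)\Big(\|u\|^2_{L^2((1+|x|)^{-1-\alpha})}+\|f\|^2_{L^2((1+|x|)^{1+\alpha})}\Big),
\]
using $|V_1+V_2|\le 2a$, $\tau<\tau_1$, the pointwise bound $|\nabla w|\le (1+\alpha)w$ to absorb the cross term $\mathrm{Re}\int(\nabla u\cdot\nabla w)\bar u$, and the duality $\int|f||u|\le\|f\|_{L^2((1+|x|)^{1+\alpha})}\|u\|_{L^2((1+|x|)^{-1-\alpha})}$. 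Together with the $L^2$ bound above this yields \eqref{is}.

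For the weighted $L^2$ bound itself I would argue by contradiction. If it fails for one of the two signs, fix that sign; then there are sequences $\tau_n\in(\tau_0,\tau_1)$, $\epsilon_n\in(0,1)$, $f_n\in L^2((1+|x|)^{1+\alpha})$ and solutions $u_n\in L^2((1+|x|)^{-1-\alpha})$ of the corresponding equation with $\|u_n\|_{L^2((1+|x|)^{-1-\alpha})}=1$ and $\|f_n\|_{L^2((1+|x|)^{1+\alpha})}\to 0$. By Bolzano--Weierstrass, pass to a subsequence with $\tau_n\to\bar\tau\in[\tau_0,\tau_1]$ and $\epsilon_n\to\bar\epsilon\in[0,1]$ (and with the $\pm$ sign already fixed). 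Since $f_n\to 0$ in $L^2((1+|x|)^{1+\alpha})$ and $\tau_n+i\epsilon_n\to\bar\tau+i\bar\epsilon$, Lemma \ref{lema1} applies and gives that $u_n$ converges in $L^2((1+|x|)^{-1-\alpha})$ to the unique solution $u$ of the limiting equation with right-hand side $0$. But $u\equiv 0$ is a solution of that equation, so the uniqueness clause of Lemma \ref{lema1} forces $u=0$, whence $\|u_n\|_{L^2((1+|x|)^{-1-\alpha})}\to 0$, contradicting $\|u_n\|_{L^2((1+|x|)^{-1-\alpha})}=1$. This establishes the weighted $L^2$ bound with a constant depending only on $\tau_0$ and $\tau_1$, and, as explained, \eqref{is} follows.

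The main obstacle — and in fact the entire substance of the argument — is concentrated in the invocation of Lemma \ref{lema1}: it is the Ikebe--Saito result that excludes a nonzero solution of the limiting Helmholtz problem in $L^2((1+|x|)^{-1-\alpha})$, and it is precisely for this that the decay and one-sided radial-monotonicity hypotheses \eqref{c1s}--\eqref{c3s} are imposed (and why potentials homogeneous of degree zero have to be excluded here). Everything else is routine: the normalization, extraction of convergent subsequences of $\tau_n$ and $\epsilon_n$, fixing the $\pm$ sign, and the weighted energy estimate for the gradient. The only technical point worth making explicit is that the limiting value $\bar\epsilon$ may equal $0$, so Lemma \ref{lema1} is being used exactly in its limiting-absorption form; this is harmless, since that is the content of the cited results of \cite{IS}.
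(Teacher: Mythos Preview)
Your argument is correct and follows the same strategy as the paper's proof: a contradiction argument based on Lemma~\ref{lema1} (Ikebe--Saito) for the weighted $L^2$ bound, together with the weighted energy identity (the paper's identity (\ref{alberto1}) with $\varphi=(1+|x|)^{-1-\alpha}$) to control the gradient. The only difference is organizational: you first reduce to the $L^2$ bound via the energy identity and then run the contradiction on $\|u_n\|_{L^2((1+|x|)^{-1-\alpha})}=1$, whereas the paper normalizes the sum $\|u_n\|+\|\nabla u_n\|=1$ and uses the same energy identity inside the contradiction to show the gradient term also tends to zero.
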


\begin{proof}

Suppose that (\ref{is}) is false.  Then, on one hand,  there exist sequences
$\{\epsilon_n\} \in (0,1),$ $\{\tau_n\} \in (\tau_0,\tau_1),$
$\{f_n \} \in L^2((1+|x|)^{1+\alpha})$ and $\{u_n \} \in
L^2((1+|x|)^{-1-\alpha}) $ solution of
\begin{equation*}
-\Delta u+(V_1+V_2)\ u\pm i\epsilon_n + \tau_n u=f_n,
\hspace{0.4cm} x\in\mathbb{R}^d \;\;\;\; d\ge 2,
\end{equation*}
such that
\begin{equation}
\label{cero} \lim_{n\rightarrow
0}\|f_n\|_{L^2((1+|x|)^{1+\alpha})}=0,
\end{equation}
and for all $n \in\mathbb{N},$
\begin{equation}
\label{uno} \|u_n\|_{L^2((1+|x|)^{-1-\alpha})}+\|\nabla
u_n\|_{L^2((1+|x|)^{-1-\alpha})}=1.
\end{equation}
We can take  subsequence $\tau_0 < \tau_m  < \tau_1$ and $0 <
\epsilon_m < 1$ such that
$$\tau_m + i \epsilon_m \; \rightarrow \; \tau + i \epsilon ,$$
with $\tau_0 \leq \tau  \leq \tau_1$ and $0 \leq \epsilon \leq 1$.
Since $f_m \; \rightarrow \; 0$ in $L^2((1+|x|)^{1+\alpha})$, from
Ikebe-Saito's lemma $u_m \; \rightarrow \; 0$ in
$L^2((1+|x|)^{-1-\alpha})$.

On the other hand, if we take
$\varphi=\frac{1}{(1+|x|)^{1+\alpha}}$ in (\ref{alberto1}) and we
use (\ref{c1s}) and (\ref{c2s}), we obtain $$
\int_{\mathbb{R}^d}\frac{|\nabla u_m(x)|^2}{(1+|x|)^{1+\alpha}}dx
\leq $$
$$C(1+\tau_m)\int_{\mathbb{R}^d}\frac{|u_m(x)|^2}{(1+|x|)^{1+\alpha}}dx
+C
\|f_m\|_{L^2((1+|x|)^{1+\alpha})}\|u_m\|_{L^2((1+|x|)^{-1-\alpha})},$$
then
$$\int_{\mathbb{R}^d}\frac{|\nabla
u_m(x)|^2}{(1+|x|)^{1+\alpha}}dx  \, \longrightarrow \; 0,$$ but
this is a contradiction with the fact that $u_m \; \rightarrow \;
0$ in $L^2((1+|x|)^{-1-\alpha})$ and (\ref{uno}).
 \end{proof}

\noindent

\begin{theorem}
\label{tsinpeque} Let $V_1$ and $V_2$ as in proposition
\ref{proposicion1} and $ u$ the solution of
(\ref{ecuacionsinpeque}). Given $\tau_0>0$ and $\alpha>0$, the
following a priori estimate holds:
\begin{equation}\label{basicsinpeque}
 \|\nabla u\|^2_{L^2((1+|x|)^{-1-\alpha})} + \ \tau\
\|u\|^2_{L^2((1+|x|)^{-1-\alpha})}  \leq C(\tau_0)\
\|f\|^2_{L^2((1+|x|)^{1+\alpha})},
\end{equation}
where $C(\tau_0)$ is a positive constant independent of $\epsilon$
and $\tau.$
\end{theorem}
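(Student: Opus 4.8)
The plan is to combine the large-$\tau$ estimate from Proposition \ref{proposicion1} with the compactness-type estimate from Proposition \ref{proposicion2}, using that $\|\cdot\|_{X_{\tau_0}}$-norms and weighted $L^2((1+|x|)^{\mp(1+\alpha)})$-norms control each other appropriately. First I would observe that for any fixed $\tau_0>0$ the weights satisfy $(1+|x|)^{-1-\alpha}\le C(\tau_0)\,\psi$-type bounds so that
$$
\|\nabla u\|^2_{L^2((1+|x|)^{-1-\alpha})}\le C(\tau_0)\,\|\nabla u\|^2_{X_{\tau_0}},
\qquad
\|f\|^2_{X^*_{\tau_0}}\le C(\tau_0)\,\|f\|^2_{L^2((1+|x|)^{1+\alpha})},
$$
with analogous inequalities for $\tau^{1/2}\|u\|$; these are elementary comparisons between the $X_{\tau_0}$, $X^*_{\tau_0}$ norms (dyadic sums over $|x|\ge \tau_0$, plus the ball $B(0,\tau_0)$) and the polynomial weights, exactly as in \eqref{faltaba}. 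Consequently Proposition \ref{proposicion1} already yields \eqref{basicsinpeque} in the range $\tau\ge B(\tau_0)$, where $B(\tau_0)$ is the explicit constant \eqref{tonto2}; note $B$ is finite for every fixed $\tau_0>0$.

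It therefore remains to cover the bounded range $0<\tau_0\le\tau\le B(\tau_0)$ (and the corresponding interval of $\tau$; one also has to keep track of the sign convention $+\tau$ here versus $-\tau$ elsewhere, but this only relabels the spectral parameter). For this I would apply Proposition \ref{proposicion2} with $\tau_1=B(\tau_0)+1$: it gives, uniformly for $\tau_0<\tau<\tau_1$ and $0<\epsilon<1$,
$$
\|u\|_{L^2((1+|x|)^{-1-\alpha})}+\|\nabla u\|_{L^2((1+|x|)^{-1-\alpha})}\le C(\tau_0,\tau_1)\,\|f\|_{L^2((1+|x|)^{1+\alpha})}.
$$
Since on this compact $\tau$-interval $\tau\le\tau_1=B(\tau_0)+1$ is bounded, squaring and absorbing the factor $\tau$ into the constant gives \eqref{basicsinpeque} for $\tau_0\le\tau\le B(\tau_0)$. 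One small point: Proposition \ref{proposicion2} is stated for $0<\epsilon<1$, whereas \eqref{ecuacionsinpeque} allows any $\epsilon\neq0$; for $|\epsilon|\ge 1$ one returns to Proposition \ref{proposicion1}-type arguments (or to the trivial resolvent bound $\|R_H(\tau\pm i\epsilon)\|\le|\epsilon|^{-1}$ together with \eqref{alberto1}), so the genuinely delicate region is indeed $|\epsilon|<1$, $\tau$ bounded, which is precisely what the Ikebe--Saito compactness argument handles.

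The main obstacle is the passage through the bounded-$\tau$, small-$\epsilon$ regime: here there is no positive commutator gain from $\tau$, so one cannot run the multiplier computation directly, and instead one is forced into the indirect compactness argument of Proposition \ref{proposicion2}, which rests on the Ikebe--Saito uniqueness theorem (Lemma \ref{lema1}) and hence on the decay hypotheses \eqref{c1s}--\eqref{c3s} excluding $1/|x|^2$-type potentials and zero-energy eigenfunctions. Once these two regimes are patched, the final step is routine: take $\tau_0$ as given, set $C(\tau_0)=\max$ of the two constants obtained, and conclude \eqref{basicsinpeque} for all $\tau\ge\tau_0$. From \eqref{basicsinpeque} one then reads off the resolvent bounds $\|\nabla R_H(\tau\pm i\epsilon)f\|_{L^2((1+|x|)^{-1-\alpha})}\le C(\tau_0)\|f\|_{L^2((1+|x|)^{1+\alpha})}$ and $\tau^{1/2}\|R_H(\tau\pm i\epsilon)f\|_{L^2((1+|x|)^{-1-\alpha})}\le C(\tau_0)\|f\|_{L^2((1+|x|)^{1+\alpha})}$ uniformly in $\epsilon\neq0$, which is what feeds into the Kato supersmoothing machinery to prove Theorem \ref{teorema3}.
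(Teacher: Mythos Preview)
Your proposal is correct and follows essentially the same route as the paper: compare the $X_{\tau_0}$, $X^*_{\tau_0}$ norms with the polynomial weights, invoke Proposition~\ref{proposicion1} for $\tau\ge B(\tau_0)$, and use Proposition~\ref{proposicion2} on the compact interval $[\tau_0,B(\tau_0)]$. Your remark on the range $|\epsilon|\ge 1$ is a useful addition that the paper leaves implicit.
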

\begin{proof} Again, by a density argument we can assume without loss of
generality that $f\in L^2(\mathbb{R}^d).$

For $\tau_0 >0$ we have:
\begin{equation}\label{mayo1}
L^2((1+|x|)^{1+\alpha}) \subset X^*_{\tau_0} \hspace{0.5cm}
\textrm{and} \hspace{0.5cm} \|f\|_{X^*_{\tau_0}} \leq C
\tau_0^{\frac{1}{2}} \|f\|_{L^2((1+|x|)^{1+\alpha})}
\end{equation}
and
\begin{equation}\label{mayo2}
X_{\tau_0} \subset L^2((1+|x|)^{-1-\alpha}) \hspace{0.5cm}
\textrm{and} \hspace{0.5cm} \|u\|_{L^2((-1-|x|)^{1+\alpha})} \leq
C \tau_0^{\frac{1}{2}} \|u\|_{X_{\tau_0}}.
\end{equation}

Let $B(\tau_0)$ be the constant defined by (\ref{tonto2}). If
$B(\tau_0) \leq \tau_0$, (\ref{basicsinpeque}) is a consequence of
(\ref{mayo1}), (\ref{mayo2}) and (\ref{mayo10}) ( or
(\ref{mayo10b})) .

Suppose that $B(\tau_0) > \tau_0$. If $\tau \in [\tau_0,
B(\tau_0)]$, (\ref{basicsinpeque}) is a consequence of proposition
\ref{proposicion2}. If $ \tau \geq B(\tau_0)$,
(\ref{basicsinpeque}) is a consequence of proposition
\ref{proposicion1}.
\end{proof}

\subsection{Proof of the theorem \ref{teorema3}}
We prove  estimate (\ref{sofia12a}) and (\ref{sofia12b}) can be obtained in a
similar way.

We write $\psi(x)=(1+|x|)^{-1-\alpha}.$  To show that
(\ref{sofia12a}) holds it is enough to prove that the operator
$\psi^{\frac{1}{2}}D^{\frac{1}{2}}$ is $H-$su\-per\-smooth in
$[\tau_0,\infty)$, which means by definition that for all $f \in
\mathcal{D}(\psi^{\frac{1}{2}}D^{\frac{1}{2}}) \subset
 L^2(\mathbb{R}^d),$ $\tau\ge \tau_0$ and
$\epsilon> 0,$ there exists a a positive constant $C$ independent
of $\tau$ and $\epsilon$ such that
$$
\|\psi^{\frac{1}{2}}D^{\frac{1}{2}}R_H(\tau\pm
i\epsilon)D^{\frac{1}{2}}\psi^{\frac{1}{2}}f\|_{L^2} \le
C(\tau_0)\ \|f\|_{L^2}.
$$

Following the proof of theorem \ref{teorema1}, but using theorem
\ref{tsinpeque} instead of (\ref{agosto1}), we get
(\ref{sofia12a}).
 \hfill
  $\square$
\section{Appendix 1}
Here we state some identities that have been used throughout
the paper. They follow by using integration by parts.

Let $u$ be a solution of the equation
\begin{equation}
 -\Delta u+V(x)\ u \pm i\epsilon u - \tau u=f,
  \quad \epsilon\neq 0,\ \;\; x\in\mathbb{R}^d\ \;\;\;d\ge 2.
\end{equation}
If $\varphi$ and $\Phi$ are two real valued functions in
$\mathcal{S}(\mathbb{R}^d)$, then the following identities hold:
\begin{equation}\label{alberto1}
\left| \begin{array}{ll} \int_{\mathbb{R}^d} \varphi(x) |\nabla
u(x)|^2 dx -\frac{1}{2}\int_{\mathbb{R}^d} \Delta \varphi(x)
|u(x)|^2 dx +\int_{\mathbb{R}^d} \varphi(x) V(x)|u(x)|^2 dx

\vspace{0.3cm}

\\ \hspace{1.5cm}  - \tau \int_{\mathbb{R}^d} \varphi(x) | u(x)|^2 dx
=\Re \int_{\mathbb{R}^d} \varphi(x) f(x) \bar{u}(x) dx.
\end{array} \right.
\end{equation}
\begin{equation}
\pm \epsilon  \int_{\mathbb{R}^d} \varphi(x) | u(x)|^2 dx +
\Im\int_{\mathbb{R}^d} \nabla \varphi(x) \cdot\nabla
u(x)\bar{u}(x) dx \label{alberto2} =\Im
\int_{\mathbb{R}^d}\varphi(x) f(x) \bar{u}(x) dx.
\end{equation}
\begin{equation}\label{alberto3}
\left| \begin{array}{lll} \hspace{0.5cm} \int_{\mathbb{R}^d}
\nabla \bar{u}(x) \cdot D^2 \Phi(x)\cdot\nabla  u(x) dx

- \frac{1}{4} \int_{\mathbb{R}^d}\Delta^2\Phi(x) | u(x)|^2 dx

\vspace{0.3cm}

\\ \hspace{1cm} - \frac{1}{2}\int_{\mathbb{R}^d}\nabla V(x) \cdot \nabla\Phi (x)
| u(x)|^2 dx =\pm \epsilon \Im \int_{\mathbb{R}^d} \nabla \Phi (x)
\cdot \nabla \bar{u }(x)  u(x)dx

\vspace{0.3cm}

\\
\hspace{1cm} - \Re \int_{\mathbb{R}^d} f(x) \left(\nabla \Phi (x)
\cdot\nabla \bar{u }(x) + \frac{1}{2} \Delta \Phi(x)
\bar{u}(x)\right)dx .
\end{array} \right.
\end{equation}
\begin{equation}\label{alberto4}
\left| \begin{array}{lllll}  \int_{\mathbb{R}^d} \nabla \bar{u}(x)
\cdot D^2 \Phi(x) \cdot\nabla  u(x) dx -\int_{\mathbb{R}^d}
\varphi(x) |\nabla u(x)|^2 dx

\vspace{0.3cm}

\\
+ \frac{1}{4} \int_{\mathbb{R}^d} \Delta(2\varphi- \Delta\Phi(x))
| u(x)|^2 dx - \frac{1}{2}\int_{\mathbb{R}^d}\nabla V(x)
\cdot\nabla \Phi (x) | u(x)|^2 dx

\vspace{0.3cm}

\\
-\int_{\mathbb{R}^d} \varphi(x)V(x)|u(x)|^2 dx + \tau
\int_{\mathbb{R}^d} \varphi(x) | u(x)|^2 dx

\vspace{0.3cm}

\\

=-\Re\int_{\mathbb{R}^d} \varphi(x) f(x)\bar{u}(x) dx \pm \epsilon
\Im\int_{\mathbb{R}^d} \nabla \Phi (x) \cdot \nabla \bar{u }(x)
u(x)dx

\vspace{0.3cm}

\\
- \Re\int_{\mathbb{R}^d} f(x) \left( \nabla \Phi (x) \cdot
\nabla\bar{u }(x) + \frac{1}{2} \Delta \Phi (x) \bar{u}(x) \right)
dx  . \end{array} \right.
\end{equation}
Furthermore, if we write $V(x)=V_1(x)+V_2(x)$, then
\begin{equation}\label{alberto13}
\left| \begin{array}{lllll} \int_{\mathbb{R}^d} \nabla \bar{u}(x)
\cdot D^2 \Phi(x)\cdot\nabla  u(x) dx

\vspace{0.3cm}

\\
- \frac{1}{4} \int_{\mathbb{R}^d}\Delta^2\Phi(x) | u(x)|^2 dx -
\frac{1}{2}\int_{\mathbb{R}^d}\nabla V_1(x) \cdot \nabla\Phi (x) |
u(x)|^2 dx

\vspace{0.3cm}

\\
+ \frac{1}{2} \int_{\mathbb{R}^d}V_2(x) \nabla\Phi
(x)\cdot\nabla(| u(x)|^2) dx +\frac{1}{2}\int_{\mathbb{R}^d}V_2(x)
\Delta\Phi (x) | u(x)|^2 dx

\vspace{0.3cm}

\\
=- \Re \int_{\mathbb{R}^d} f(x) \left(\nabla \Phi (x) \cdot\nabla
\bar{u }(x) + \frac{1}{2} \Delta \Phi(x) \bar{u}(x)\right)dx

\vspace{0.3cm}

\\
\pm \epsilon \Im \int_{\mathbb{R}^d} \nabla \Phi (x) \cdot \nabla
\bar{u }(x)  u(x)dx. \end{array} \right.
\end{equation}
and
\begin{equation}\label{alberto14}
\left| \begin{array}{llllll} \int_{\mathbb{R}^d} \nabla \bar{u}(x)
\cdot D^2 \Phi(x) \cdot\nabla  u(x) dx -\int_{\mathbb{R}^d}
\varphi(x) |\nabla u(x)|^2 dx

\vspace{0.3cm}

\\
+ \frac{1}{4} \int_{\mathbb{R}^d} \Delta(2\varphi- \Delta\Phi(x))
| u(x)|^2 dx - \frac{1}{2}\int_{\mathbb{R}^d}\nabla V_1(x)
\cdot\nabla \Phi (x) | u(x)|^2 dx

\vspace{0.3cm}

\\
 +\frac{1}{2}\int_{\mathbb{R}^d} V_2(x)
\Delta \Phi(x) |u(x)|^2dx +\frac{1}{2}\int_{\mathbb{R}^d} V_2(x)
\nabla \Phi(x) \cdot \nabla (|u(x)|^2)dx

\vspace{0.3cm}

\\
-\int_{\mathbb{R}^d} \varphi(x)V(x)|u(x)|^2 dx + \tau
\int_{\mathbb{R}^d} \varphi(x) | u(x)|^2 dx

\vspace{0.3cm}

\\
=-\Re\int_{\mathbb{R}^d} \varphi(x) f(x)\bar{u}(x) dx \pm \epsilon
\Im\int_{\mathbb{R}^d} \nabla \Phi (x) \cdot \nabla \bar{u }(x)
u(x)dx

\vspace{0.3cm}

\\
- \Re\int_{\mathbb{R}^d} f(x) \left( \nabla \Phi (x) \cdot
\nabla\bar{u }(x) + \frac{1}{2} \Delta \Phi (x) \bar{u}(x) \right)
dx, \end{array} \right.
\end{equation}
where $\Delta^2$ denotes the bilaplacian and $ D^2 \Phi$ denotes
the Hessian matrix of $\Phi$ with respect to $x_1,\ldots,x_d.$

\section{Appendix 2}

The following lemma is implicit   in
\cite{BRV}.

\begin{lemma}\label{lema6}
Let $\alpha$, $\epsilon$, $\kappa$ and $R$ four positive constants
and $h(t)$ a no negative function in $(0,\infty)$ such that
\begin{equation}\label{izaskum}
 \alpha+\frac{\epsilon}{6}+\int_0^\infty t h(t)dt <  \kappa < \frac{1}{2}.
\end{equation}
Then, we can find a radial function   $\Phi (x) \equiv \Phi(r)$,
$|x|=r$, solution of
\begin{equation}\label{izaskum4}
\Delta^2 \Phi(x)=
-\frac{\epsilon}{R^3}\chi_{(0,R)}(x)-\frac{h(|x|)}{|x|}
\hspace{1cm} x \in \mathbb{R}^3
\end{equation}
such that
\begin{equation}\label{izaskum1}
\inf_{r>0}\{\Phi'(r),\Phi''(r) \} \geq 0,
\end{equation}
\begin{equation}\label{izaskum2}
\inf_{r \in (0,R)}\left\{\frac{\Phi'(r)}{r},\Phi''(r) \right\}
\geq \frac{C \epsilon}{R},
\end{equation}
\begin{equation}\label{izaskum3}
\alpha < \Phi'(r) < \kappa < \frac{1}{2}, \;\;\;\; r>0,
\end{equation}
where  $C$ is an absolute constant.
\end{lemma}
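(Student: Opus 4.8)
The plan is to reduce the biharmonic equation to two successive first-order radial ODEs, integrate them by quadrature with the free constants pinned down by regularity at the origin, and then read off the four properties from explicit formulas.

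Working with radial functions in $\mathbb{R}^3$, where $\Delta g = r^{-2}(r^2 g')'$, I would set $\psi = \Delta\Phi$, so that (\ref{izaskum4}) reads $\Delta\psi = -\epsilon R^{-3}\chi_{(0,R)} - h(r)/r$. Integrating this in the form $(r^2\psi')' = -\epsilon R^{-3} r^2\chi_{(0,R)} - r\,h(r)$ from the origin, where the integration constant must vanish for $\psi$ to be regular at $0$, gives $r^2\psi'(r) = -g(r)$, where
\[
g(r) = \frac{\epsilon}{3R^{3}}\,\min(r,R)^{3} + \int_0^{r} s\,h(s)\,ds
\]
is nonnegative, nondecreasing, $g(0)=0$, and $g(\infty) = K := \tfrac{\epsilon}{3} + \int_0^\infty s\,h(s)\,ds$, which is finite by (\ref{izaskum}). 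Integrating once more with the normalization $\psi(\infty)=0$ yields $\psi(r) = \int_r^\infty g(s)\,s^{-2}\,ds$, a positive nonincreasing function with $\psi(r)\le K/r$. Solving $\Delta\Phi_0 = \psi$ the same way gives $r^2\Phi_0'(r) = \int_0^r s^2\psi(s)\,ds$ (again no $r^{-1}$ term, so $\Phi_0'(0)=0$), and I would finally put $\Phi(x) = \alpha\,|x| + \Phi_0(|x|)$. Away from the origin $\Delta^2\Phi$ then equals the prescribed right-hand side of (\ref{izaskum4}); the Morawetz term $\alpha|x|$ contributes only the point mass $\Delta^2(\alpha|x|) = -8\pi\alpha\,\delta_0$, which is harmless -- indeed favourable -- in the multiplier identity (\ref{alberto3}) in which $\Phi$ is used, and which is why this term is needed (without it $\Phi'(0^+)=0$ and (\ref{izaskum3}) would fail).

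Next I would verify the four conclusions from these formulas. Since $\Phi' = \alpha + \Phi_0' \ge \alpha > 0$ and $\Phi'' = \Phi_0'' = \psi - \tfrac{2}{r}\Phi_0' = r^{-3}Q(r)$ with $Q(r) = r^3\psi(r) - 2\int_0^r s^2\psi(s)\,ds$, property (\ref{izaskum1}) reduces to $Q \ge 0$: one has $Q(0)=0$ and $Q'(r) = r\bigl(r\psi(r) - g(r)\bigr)\ge 0$ because $r\psi(r) = r\int_r^\infty g(s)s^{-2}\,ds \ge g(r)$ by monotonicity of $g$. For (\ref{izaskum2}), on $(0,R)$ I would use that $\psi$ is nonincreasing to get $\Phi_0'(r)/r = r^{-3}\int_0^r s^2\psi \ge \tfrac13\psi(r) \ge \tfrac13\int_R^\infty g(R)s^{-2}\,ds = g(R)/(3R) \ge \epsilon/(9R)$, and for $\Phi''$ I would bound $Q(r) = \int_0^r s\bigl(s\psi(s)-g(s)\bigr)\,ds$ from below by discarding all but the range $s\in(0,R/4)$, where $s\psi(s)-g(s) = s\int_s^\infty (g(t)-g(s))t^{-2}\,dt \gtrsim \epsilon s/R$ (keeping only $t\in(R/2,R)$ and using $g(t)-g(s)\ge \tfrac{\epsilon}{3R^3}((R/2)^3-(R/4)^3)$), so that $Q(r) \gtrsim (\epsilon/R)\min(r,R/4)^3$ and hence $\Phi''(r) = r^{-3}Q(r)\gtrsim \epsilon/R$ on all of $(0,R)$, with an absolute constant. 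For (\ref{izaskum3}): $\Phi_0'$ is strictly increasing (as $\Phi_0''>0$ for $r>0$ when $\epsilon>0$) and strictly positive, so $\alpha < \Phi'(r) < \alpha + \Phi_0'(\infty)$; a short computation (Fubini plus an averaging argument) identifies $\Phi_0'(\infty) = \lim_{r\to\infty} r^{-2}\int_0^r s^2\psi(s)\,ds = K/2$, whence $\Phi'(r) < \alpha + \tfrac{\epsilon}{6} + \tfrac12\int_0^\infty s\,h(s)\,ds \le \alpha + \tfrac{\epsilon}{6} + \int_0^\infty s\,h(s)\,ds < \kappa < \tfrac12$ by (\ref{izaskum}).

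The hard part will be the lower bound $\Phi''(r)\ge C\epsilon/R$ on $(0,R)$ with $C$ absolute: it cannot be read off locally near $r=R$, where $Q'$ degenerates, and instead one must exploit that $Q(r)=\int_0^r Q'$ accumulates the mass deposited by the $\chi_{(0,R)}$ source over $(0,R/4)$, which already forces $Q(r)\gtrsim \epsilon R^2$ there. Everything else is routine bookkeeping for the two quadratures, together with the standard computation $\Delta^2|x| = -8\pi\delta_0$ in $\mathbb{R}^3$.
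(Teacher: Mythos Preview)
Your argument is correct and follows essentially the same route as the paper's: explicit quadrature of the radial biharmonic ODE in $\mathbb{R}^3$, with the integration constants fixed by regularity at the origin and the addition of the Morawetz term $\alpha|x|$ (the paper's $c_1=\alpha$). The only organizational differences are that the paper splits the source into the $h$-part and the $\epsilon\chi_{(0,R)}$-part and integrates each separately via $\Phi^{iv}+\tfrac{4}{r}\Phi^{iii}$, whereas you combine them into a single $g$ and go through $\psi=\Delta\Phi$; the paper's splitting makes the lower bound $\Phi''\ge C\epsilon/R$ on $(0,R)$ immediate (since their $\varphi''(r)=\int_r^\infty s^{-4}\int_0^s t^4 m(t)\,dt\,ds\ge \epsilon/(15R)$ there, while $\psi''\ge 0$), which is why you had to work harder with the $Q(r)$ argument---but your treatment of that step is fine.
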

 \proof
In dimension three the bilaplacian of a radial function $\Phi$ has
the  simple expression $\Phi^{iv}+\frac{4}{r}\Phi^{iii}$.

If we integrate (\ref{izaskum4}) we have
\begin{equation}\label{izaskum5}
\Phi'(r)=\psi'(r)+\varphi'(r),
\end{equation}
where
\begin{equation}\label{izaskum6}
\psi'(r)=\frac{1}{r^2}\int_0^r u^2 \int_u^\infty \frac{1}{s^2}
\int_0^sth(t)dtdsdu + c_1+\frac{c_2}{r}
\end{equation}
\begin{equation}\label{izaskum7}
\varphi'(r)=-\frac{1}{r^2}\int_0^r u^2
\int_0^{u}\frac{1}{s^2}\int_0^s t^2 m(t)dtdsdu+c_3r
\end{equation}
with $c_1$, $c_2$ and $c_3$ constant and
$$m(t)=\frac{\epsilon}{R^3}\chi_{(0,R)}(t).$$
If we take derivative in (\ref{izaskum6}) and then we use Fubini's theorem we have
\begin{equation*}
\psi''(r)=\frac{1}{3r^3}\int_0^r
t^3h(t)dt+\frac{1}{3}\int_r^\infty h(t)dt-\frac{2c_2}{r^3}.
\end{equation*}
A similar manipulation gives us
\begin{equation*}
\varphi''(r)=-\int_0^r \frac{1}{s^4}\int_0^st^4m(t)dtds+c_3.
\end{equation*}
In order to have $\psi'' \geq 0$ and $\varphi'' \geq 0$, we take
in the above expressions  $c_2=0$ and $$c_3=\int_0^\infty
\frac{1}{s^4}\int_0^st^4m(t)dtds.$$ Then
\begin{equation}\label{izaskum8}
\psi''(r)=\frac{1}{3r^3}\int_0^r
t^3h(t)dt+\frac{1}{3}\int_r^\infty h(t)dt,
\end{equation}
and
\begin{equation}\label{izaskum9}
\varphi''(r)=\int_r^\infty \frac{1}{s^4}\int_0^st^4m(t)dtds+c_3.
\end{equation}
It is easy to check that $\varphi'(0)=0$ and therefore
\begin{equation}\label{izaskum10}
\varphi'(r)=\int_0^r \varphi''(u)du.
\end{equation}
If we take in (\ref{izaskum6}) $c_1=\alpha$ and we use
(\ref{izaskum5})-(\ref{izaskum10}), we can check
(\ref{izaskum1}), (\ref{izaskum2}) and $\Phi'(r) \geq \alpha$.

To see $\Phi'(r) \leq \kappa $, since $\psi'' \geq 0$, we have
$$\Phi'(r) \leq \alpha + \frac{1}{2} \int_0^\infty th(t)dt +
\varphi'(r).$$ From (\ref{izaskum9}) and (\ref{izaskum10}) we have
that $$\varphi'(r) \leq \frac{\epsilon}{6}$$ and $\Phi'(r) <
\kappa < \frac{1}{2}$ follows by (\ref{izaskum}).

\bibliographystyle{amsplain}

\end{document}